\renewenvironment{proof}{\noindent{\bf Proof.}}{~~$\Box$}
\theoremstyle{plain}
\newtheorem{thrm}{Theorem}[section]
\newtheorem{lemm}[thrm]{Lemma}
\newtheorem{prot}[thrm]{Proposition}
\newtheorem{corl}[thrm]{Corollary}
\theoremstyle{definition}
\newtheorem{dfnt}[thrm]{Definition}
\newtheorem{remk}[thrm]{Remark}
\newtheorem{exa}[thrm]{Example}
\newcommand{\MySs}{
\begin{tikzpicture}[baseline]
\draw (0,0em) -- (0em,0.6em) -- (0.6em,0.6em) -- (0.6em,0em) -- (0,0em);
\draw (0,0em) parabola (0.6em, 0.6em);
\end{tikzpicture}}
\begin{document}
 \title{\bf On the pointwise supremum of the set of copulas with a given curvilinear section}
\author{
Yao Ouyang$\sp{a}$\thanks{oyy@zjhu.edu.cn}\qquad Yonghui Sun$\sp{a}$\thanks{syh\_0729@163.com}\qquad Hua-peng Zhang$\sp{b}$\thanks{Corresponding author. huapengzhang@163.com}\\
 $\sp{a}$\small\it Faculty of Science, Huzhou University, Huzhou, Zhejiang 313000, China \\
$\sp{b}$\small\it School of Science, Nanjing University of Posts and Telecommunications, \\ \small\it Nanjing 210023, China \\
}
\date{}
\maketitle
\begin{abstract}
Making use of the total variation of particular functions, we give an explicit formula for the pointwise supremum of the set of all copulas with a given curvilinear section. When the pointwise supremum is a copula is characterized. We also characterize the coincidence of the pointwise supremum and the greatest quasi-copula with the same curvilinear section.

{\it Keywords:} Copula, Quasi-copula, Curvilinear section, Total variation
\end{abstract}
\section{Introduction}

In the realm of statistical modeling and data analysis, understanding and modeling the dependencies among variables is crucial for accurate predictions and informed decision-making. Traditional methods often rely on assumptions about the joint distribution of variables, which can be limiting and restrictive, especially when dealing with complex real-world data. Copula-based method \cite{GR10,J14,TWSH19} allows for the separate modeling of marginal distributions and the dependency structure among variables, providing a more nuanced and realistic representation of the underlying data-generating process. Thus it offers a flexible and versatile alternative to those traditional methods.

Copulas, which were first introduced by Sklar in~\cite{S59}, find uses in various domains such as finance and insurance. In finance, copulas have been instrumental in modeling the joint behavior of asset returns, risk management, and portfolio optimization~\cite{CLV04}. In insurance, they have been used to assess the risk of simultaneous occurrences of extreme events~\cite{FM13}.

Over the years, the investigation of (quasi-)copulas with a given diagonal section has attracted much attention~\cite{DJ08,NQRU04,NQRU08}, since the diagonal section $\delta_C$ provides some information about the tail dependence of a bivariate random vector and $\delta_C$ itself is the distribution function of the random variable $\max\{X, Y\}$ provided that the copula of $(X, Y)$ is $C$. For a given diagonal section, the smallest copula always exists, while the greatest one may not. Recently,  Kokol Bukov\v{s}ek et al.~\cite{KMS24} presented an explicit representation of the pointwise supremum of the set of all copulas with a given diagonal section.

The study of (quasi-)copulas with a given diagonal section has been extended to that with a given curvilinear section~\cite{BMJ19}.
For a given curvilinear section $\Gamma_\phi$, the set of all copulas (resp. quasi-copulas), denoted as $\mathbb{C}_{\Gamma_\phi}$ (resp. $\mathbb{Q}_{\Gamma_\phi}$), is nonempty. The smallest element of $\mathbb{C}_{\Gamma_\phi}$ and that of $\mathbb{Q}_{\Gamma_\phi}$ exist and coincide. Although the greatest element of $\mathbb{Q}_{\Gamma_\phi}$ exists, denoted as $A_{\Gamma_\phi}$, the greatest element of $\mathbb{C}_{\Gamma_\phi}$ may not.
The present authors~\cite{OSZ25} characterized when $A_{\Gamma_\phi}$ is a copula. However, the pointwise supremum of $\mathbb{C}_{\Gamma_\phi}$ is unclear compared with the diagonal case.

The central task of this paper is to explore an explicit formula for the pointwise supremum of $\mathbb{C}_{\Gamma_\phi}$.
After recalling some necessary knowledge that will be used throughout the paper in Section~\ref{Sec-Preliminaries}, we give an explicit formula for the pointwise supremum of $\mathbb{C}_{\Gamma_\phi}$ in Section~\ref{Sec-Formula}. When the pointwise supremum of $\mathbb{C}_{\Gamma_\phi}$ is a copula is characterized in Section~\ref{Sec-ChaCop}, while Section~\ref{Sec-ChaCoin} is devoted to characterizing the coincidence of the pointwise supremum of $\mathbb{C}_{\Gamma_\phi}$ and $A_{\Gamma_\phi}$.

\section{Preliminaries}\label{Sec-Preliminaries}

In this section, we recall some basic notions and results concerning (quasi-)copulas with a given curvilinear section.
\begin{dfnt}\cite{N06}
A (bivariate) \emph{copula} is a function $C: [0, 1]^2\to [0, 1]$ satisfying:\\
(C1) the boundary condition, \emph{i.e.},
 \[
 C(x, 0)=C(0, x)=0\ {\rm and} \ C(x, 1)=C(1, x)=x\ (\forall\,x\in [0, 1]);
 \]
(C2) the $2$-increasing property, \emph{i.e.},
$V_C(R)\geq 0$ for any rectangle $R=[x_1, x_2]\times [y_1, y_2]$,
where $V_C(R)=C(x_2, y_2)-C(x_2, y_1)-C(x_1, y_2)+C(x_1, y_1)$. We will write $V(\cdot)$ instead of $V_C(\cdot)$ if there is no confusion.
\end{dfnt}

\begin{dfnt}\cite{N06}
A (bivariate) \emph{quasi-copula} is a function $Q: [0, 1]^2\to [0, 1]$ satisfying (C1) and the following conditions:\\
(Q1) $Q$ is increasing in each variable;\\
(Q2) $Q$ is $1$-Lipschitz, \emph{i.e.}, for all $x_1, x_2, y_1, y_2\in [0, 1]$, it holds that
\[
|Q(x_1, y_1)-Q(x_2, y_2)|\leq |x_1-x_2|+|y_1-y_2|.
\]
\end{dfnt}
Note that (C1) and (C2) together imply both (Q1) and (Q2), \emph{i.e.}, every copula is a quasi-copula, but not vice versa.
Three important copulas are $W, M$ and $\Pi$ given by
\[W(x, y)=\max\{x+y-1, 0\}, M(x, y)=\min\{x, y\} \ {\rm and} \ \Pi(x, y)=xy.\]
$W$ and $M$ are known as the \emph{Fr\'{e}chet-Hoeffding bounds} for copulas and quasi-copulas since $W\leq Q\leq M$
for any quasi-copula $Q$, while $\Pi$ is known as the independence copula.

\begin{dfnt}\cite{NQRU04}
A \emph{diagonal function} is a function $\delta: [0, 1]\to [0, 1]$ with the properties:\\
(D1) $\delta(1)=1$ and $\delta(t)\leq t$ for all $t\in [0, 1]$;\\
(D2) $0\leq \delta(t_2)-\delta(t_1)\leq 2(t_2-t_1)$ for all $t_1, t_2\in [0, 1]$ with $t_1\leq t_2$.
\end{dfnt}

The diagonal section $\delta_Q: [0, 1]\to [0, 1]$ of any quasi-copula $Q$ defined by $\delta_Q(t)=Q(t, t)$ is a diagonal function. Conversely, for any diagonal function $\delta$, there exist quasi-copulas with diagonal section $\delta$,
and the smallest quasi-copula $B_\delta$ with diagonal section $\delta$ is given by
\[
B_\delta(x, y) =\left \{
        \begin {array}{ll}
         y-\min\limits_{t\in[y, x]}\hat{\delta}(t)
                  &\quad \text{if\ \ $y\leq x$}  \\[2mm]
           x-\min\limits_{t\in[x, y]}\hat{\delta}(t)
                  &\quad \text{if\ \ $y>x$,}
        \end {array}
       \right.
\]
while the greatest quasi-copula $A_\delta$ with diagonal section $\delta$ is given by
\[
A_\delta(x, y) =\left \{
        \begin {array}{ll}
         \min\left\{y, x-\max\limits_{t\in[y, x]}\hat{\delta}(t)\right\}
                  &\quad \text{if\ \ $y\leq x$}  \\[2mm]
           \min\left\{x, y-\max\limits_{t\in[x, y]}\hat{\delta}(t)\right\}
                  &\quad \text{if\ \ $y>x$,}
        \end {array}
       \right.
\]
where $\hat{\delta}: [0, 1]\to [0, 1]$ is defined by $\hat{\delta}(t)=t-\delta(t)$.
Interestingly, $B_\delta$ is also a copula (called the \emph{Bertino copula}), but $A_\delta$ may not be.
There have been characterization theorems for $A_\delta$ to be a copula (see~\cite{NQRU04, NQRU08}).
Denote by $\mathbb{C}_\delta$ the set of all copulas with given diagonal section $\delta$. The explicit formula of the pointwise supremum $\bar{C}_\delta$ of $\mathbb{C}_\delta$ has recently been given by Kokol Bukov\v{s}ek et al.~\cite{KMS24}:
\[
\bar{C}_\delta(x, y)=\left \{
        \begin {array}{ll}
         \min\left\{y, x-\dfrac{1}{2}\Big(\hat{\delta}(x)+\hat{\delta}(y)+\mathbb{V}_{y}^{x}(\hat{\delta})\Big)\right\}
                  &\quad \text{if\ \ $y\leq x$}  \\[2mm]
           \min\left\{x, y-\dfrac{1}{2}\Big(\hat{\delta}(x)+\hat{\delta}(y)+\mathbb{V}_{x}^{y}(\hat{\delta})\Big)\right\}
                  &\quad \text{if\ \ $y>x$,}
        \end {array}
       \right.
\]
where $\mathbb{V}_{x}^{y}(\hat{\delta})$ is the total variation of $\hat{\delta}$ on $[x, y]$.

A finite sequence $\{x_i\}_{i=0}^n\subseteq[a, b]$ is called a~\emph{partition} of an interval $[a, b]$ if it is increasing with $x_0=a$ and $x_n=b$. Denote by $\mathcal P[a, b]$ the set of all partitions of $[a, b]$. The~\emph{total variation} $\mathbb{V}_a^b(f)$ of a real-valued function $f$ on $[a, b]$ is defined by
\[
\mathbb{V}_a^b(f)=\sup\left\{\sum_{k=1}^{n}|f(x_k)-f(x_{k-1})|\mid \{x_i\}_{i=0}^n\in\mathcal P[a, b]\right\}.
\]
 If $\mathbb{V}_a^b(f)<+\infty$, then $f$ is differentiable almost everywhere on $[a, b]$ and $\mathbb{V}_a^b(f)\geq\int_a^b|f^\prime(t)|{\rm d}t$, where the equality holds if and only if $f$ is absolutely continuous~\cite{RF10}.
We use the convention that $\mathbb{V}_b^a(f)=-\mathbb{V}_a^b(f)$ and $\mathbb{V}_a^a(f)=0$. Clearly, $\mathbb{V}_a^b(f)=\mathbb{V}_a^c(f)+\mathbb{V}_c^b(f)$ for any $c\in [a, b]$, $\mathbb{V}_a^b(f)=|f(b)-f(a)|$ if $f$ is monotone,
and $\mathbb{V}_a^b(f+g)\leq \mathbb{V}_a^b(f)+\mathbb{V}_a^b(g)$ for any two real-valued functions $f, g$ on $[a, b]$.

\begin{remk}\label{remk-variation}
For any $\{x_i\}_{i=0}^n\in\mathcal P[a, b]$, it is easy to prove by induction that there exists $\{y_i\}_{i=0}^{2m}\in\mathcal P[a, b]$ such that the sets $\{x_i\}_{i=0}^n$ and $\{y_i\}_{i=0}^{2m}$ are equal, and $f(y_{2k-1})\geq\max\{f(y_{2k-2}), f(y_{2k})\}$ for all $k\in\{1, 2, \cdots, m\}$. Hence,
\begin{eqnarray*}
  \sum_{k=1}^{n}|f(x_k)-f(x_{k-1})|
 &=& \sum_{k=1}^{2m}|f(y_k)-f(y_{k-1})| \\
 &=& \sum_{k=1}^{m}(f(y_{2k-1})-f(y_{2k-2}))+\sum_{k=1}^{m}(f(y_{2k-1})-f(y_{2k})) \\
 &=& 2\sum_{k=1}^{m}f(y_{2k-1})-2\sum_{k=1}^{m-1}f(y_{2k})-f(a)-f(b).
\end{eqnarray*}
Therefore, it holds that
\[
\mathbb{V}_a^b(f)=\sup\left\{2\sum_{k=1}^{m}f(x_{2k-1})-2\sum_{k=1}^{m-1}f(x_{2k})-f(a)-f(b)\mid \{x_i\}_{i=0}^{2m}\in\mathcal P[a, b]\right\}.
\]
\end{remk}

The study of copulas with a given diagonal section has been extended to that with a given curvilinear section~\cite{BMJ19}.
Let $\Phi$ be the set of all functions $\phi: [0, 1]\to [0, 1]$ that are continuous and strictly increasing with $\phi(0)=0$ and $\phi(1)=1$.
Obviously, ${\rm id}\in\Phi$, where ${\rm id}$ is the identity map on $[0, 1]$, \emph{i.e.}, ${\rm id}(x)=x$ for all $x\in [0, 1]$.
For any $\phi\in\Phi$, the curvilinear section $g_{Q, \phi}: [0, 1]\to [0, 1]$ of a quasi-copula $Q$ defined by $g_{Q, \phi}(t)=Q(t, \phi(t))$ has properties:
\begin{itemize}\setlength{\itemindent}{10pt}
\item[{\rm (i)}] $\max\{0, t+\phi(t)-1\}\leq g_{Q,\phi}(t)\leq\min\{t, \phi(t)\}$ for all $t\in [0, 1]$;
\item[{\rm (ii)}] $0\leq g_{Q,\phi}(t_2)-g_{Q,\phi}(t_1)\leq t_2-t_1+\phi(t_2)-\phi(t_1)$ for all $t_1, t_2\in [0, 1]$ with $t_1<t_2$.
\end{itemize}
Note that property (ii) can be rewritten as: $g_{Q, \phi}(t)$ is increasing and $g_{Q, \phi}(t)-t-\phi(t)$ is decreasing.
For any $\phi\in\Phi$, denote by $\Delta_\phi$ the set of all functions $\Gamma_\phi: [0, 1]\to [0, 1]$ with properties (i) and (ii).
Particularly, if $\phi={\rm id}$, then properties (i) and (ii) are equivalent to (D1) and (D2), and thus $\Delta_{\rm id}$ is nothing else but the set of all diagonal functions.

For any $\phi\in\Phi$ and $\Gamma_\phi\in\Delta_\phi$, similarly to the diagonal case, there exist quasi-copulas with curvilinear section $\Gamma_\phi$,
and the smallest quasi-copula $B_{\Gamma_\phi}$ with curvilinear section $\Gamma_\phi$ is given by
\[
B_{\Gamma_\phi}(x, y) =\left \{
        \begin {array}{ll}
         y-\min\limits_{t\in[\phi^{-1}(y), x]}\tilde{\Gamma}_\phi(t)
                  &\quad \text{if\ \ $y\leq\phi(x)$}  \\[2mm]
           x-\min\limits_{t\in[x, \phi^{-1}(y)]}\hat{\Gamma}_\phi(t)
                  &\quad \text{if\ \ $y>\phi(x)$,}
        \end {array}
       \right.
\]
where $\tilde{\Gamma}_\phi, \hat{\Gamma}_\phi: [0, 1]\to [0, 1]$ are defined by $\tilde{\Gamma}_\phi(t)=\phi(t)-\Gamma_\phi(t)$ and $\hat{\Gamma}_\phi(t)=t-\Gamma_\phi(t)$. $B_{\Gamma_\phi}$ is also a copula, called the \emph{Bertino copula with curvilinear section} $\Gamma_\phi$~\cite{BMJ19}.
The greatest quasi-copula $A_{\Gamma_\phi}$ with curvilinear section $\Gamma_\phi$ is given by
\[
A_{\Gamma_\phi}(x, y) =\left \{
        \begin {array}{ll}
         \min\left\{y, x-\max\limits_{t\in[\phi^{-1}(y), x]}\hat{\Gamma}_\phi(t)\right\}
                  & \text{if\ \ $y\leq\phi(x)$}  \\[2mm]
           \min\left\{x, y-\max\limits_{t\in[x, \phi^{-1}(y)]}\tilde{\Gamma}_\phi(t)\right\}
                  & \text{if\ \ $y>\phi(x)$.}
        \end {array}
       \right.
       \]
However, $A_{\Gamma_\phi}$ may not be a copula. Quite recently, we~\cite{OSZ25} completely characterized when $A_{\Gamma_\phi}$ is a copula.

\section{Representation of the pointwise supremum of $\mathbb{C}_{\Gamma_\phi}$}\label{Sec-Formula}

Denote by $\mathbb{C}_{\Gamma_\phi}$ the set of all copulas with given curvilinear section $\Gamma_\phi$.
In this section, inspired by the work~\cite{KMS24} on the diagonal case, we first construct two copulas with curvilinear section $\Gamma_\phi$ by total variation.
Then we represent the pointwise supremum of $\mathbb{C}_{\Gamma_\phi}$ in terms of the two constructed copulas.

To begin with, for $\phi\in\Phi$ and $\Gamma_\phi\in\Delta_\phi$, we define functions $f_1, f_2\colon [0, 1]^2\to (-\infty, +\infty)$ as follows:
\begin{equation*}
f_1(x, y)=x-\frac{1}{2}\Big(\mathbb{V}^{x}_{\phi^{-1}(y)}(\hat{\Gamma}_{\phi})+\hat{\Gamma}_{\phi}(x)+\hat{\Gamma}_{\phi}(\phi^{-1}(y)) \Big)
\end{equation*}
and
\begin{equation*}
f_2(x, y)=y-\frac{1}{2}\Big(\mathbb{V}^{\phi^{-1}(y)}_{x}(\tilde{\Gamma}_{\phi})+\tilde{\Gamma}_{\phi}(x)+ \tilde{\Gamma}_{\phi}(\phi^{-1}(y))\Big).
\end{equation*}
Clearly, $f_1(0, 0)=f_2(0, 0)=0$ and $f_1(1, 1)=f_2(1, 1)=1$. Moreover, $f_1$ and $f_2$ are increasing, implying that they are
binary operations on $[0, 1]$.

\begin{prot}\label{prot-agop}
Both $f_1$ and $f_2$ are increasing.
\end{prot}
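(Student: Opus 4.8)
The plan is to deduce both monotonicities from the behaviour of the two auxiliary functions
\[
g_1(x,u)=x-\frac{1}{2}\Big(\mathbb{V}_{u}^{x}(\hat{\Gamma}_\phi)+\hat{\Gamma}_\phi(x)+\hat{\Gamma}_\phi(u)\Big),\qquad
g_2(x,u)=\phi(u)-\frac{1}{2}\Big(\mathbb{V}_{x}^{u}(\tilde{\Gamma}_\phi)+\tilde{\Gamma}_\phi(x)+\tilde{\Gamma}_\phi(u)\Big),
\]
since $f_1(x,y)=g_1(x,\phi^{-1}(y))$ and $f_2(x,y)=g_2(x,\phi^{-1}(y))$ with $\phi^{-1}$ increasing; hence it suffices to show that each $g_i$ is increasing in each of its two arguments.

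Before the case analysis I would record two elementary inequalities. Property (ii) forces $\Gamma_\phi$ to be increasing, and $\hat{\Gamma}_\phi={\rm id}-\Gamma_\phi$, $\tilde{\Gamma}_\phi=\phi-\Gamma_\phi$; so by subadditivity of total variation together with $\mathbb{V}_a^b(h)=h(b)-h(a)$ for increasing $h$, one obtains for all $a\le b$ the ``tight bounds''
\[
\mathbb{V}_{a}^{b}(\hat{\Gamma}_\phi)+\hat{\Gamma}_\phi(b)-\hat{\Gamma}_\phi(a)\le 2(b-a),\qquad
\mathbb{V}_{a}^{b}(\tilde{\Gamma}_\phi)+\tilde{\Gamma}_\phi(b)-\tilde{\Gamma}_\phi(a)\le 2\big(\phi(b)-\phi(a)\big),
\]
whereas the ``crude bound'' $\mathbb{V}_a^b(h)\ge|h(b)-h(a)|$ is available for any $h$.

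Next I would show $g_1$ is increasing in $x$: fix $u$, take $x_1<x_2$, and split into the cases $u\le x_1$, $x_1<u<x_2$, $x_2\le u$. Using additivity $\mathbb{V}_a^b=\mathbb{V}_a^c+\mathbb{V}_c^b$ and the convention $\mathbb{V}_b^a=-\mathbb{V}_a^b$, the difference $g_1(x_2,u)-g_1(x_1,u)$ collapses in the two outer cases to $(x_2-x_1)-\tfrac12\big(\mathbb{V}_{x_1}^{x_2}(\hat{\Gamma}_\phi)+\hat{\Gamma}_\phi(x_2)-\hat{\Gamma}_\phi(x_1)\big)$, and in the middle case to the sum of the analogous expressions over $[x_1,u]$ and $[u,x_2]$; the first tight bound makes each summand nonnegative. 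For monotonicity of $g_1$ in $u$ I would run the same three cases on the position of $x$ relative to $u_1<u_2$; this time the difference reduces to expressions of the form $\tfrac12\big(\mathbb{V}_{u_1}^{u_2}(\hat{\Gamma}_\phi)-(\hat{\Gamma}_\phi(u_2)-\hat{\Gamma}_\phi(u_1))\big)$ (in the middle case, split at $x$), which are nonnegative by the crude bound. The treatment of $g_2$ is word-for-word the same after replacing $\hat{\Gamma}_\phi$ by $\tilde{\Gamma}_\phi$ and $b-a$ by $\phi(b)-\phi(a)$, using the second tight bound; alternatively, $f_2$ for $\Gamma_\phi$ coincides, after the transpose $(x,y)\mapsto(y,x)$, with $f_1$ for $\Gamma_\phi\circ\phi^{-1}\in\Delta_{\phi^{-1}}$, since $\widehat{\Gamma_\phi\circ\phi^{-1}}=\tilde{\Gamma}_\phi\circ\phi^{-1}$.

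The only delicate point -- the main obstacle -- is the bookkeeping in the case analysis: keeping the sign convention and the additivity of total variation straight, and noticing that the $x$-monotonicity of $f_1$ (and, dually, the $y$-monotonicity of $f_2$) genuinely needs the tight bound, whereas the other two monotonicities use only the crude bound. Continuity and strict monotonicity of $\phi$ enter solely to make $u=\phi^{-1}(y)$ a legitimate increasing substitution.
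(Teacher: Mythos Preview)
Your proposal is correct and follows essentially the same route as the paper: the same ``tight'' bound $\mathbb{V}_{a}^{b}(\hat{\Gamma}_\phi)+\hat{\Gamma}_\phi(b)-\hat{\Gamma}_\phi(a)\le 2(b-a)$ (obtained exactly as you do, via subadditivity and monotonicity of $\Gamma_\phi$) handles the first variable, and the crude bound $\mathbb{V}_{a}^{b}(h)\ge|h(b)-h(a)|$ handles the second. The only difference is that your three-case split on the position of $u$ (resp.\ $x$) is unnecessary: under the signed convention $\mathbb{V}_b^a=-\mathbb{V}_a^b$ together with additivity, the identity $\mathbb{V}_{u}^{x_2}-\mathbb{V}_{u}^{x_1}=\mathbb{V}_{x_1}^{x_2}$ holds uniformly regardless of where $u$ sits, so the paper writes the simplified difference directly without any case analysis.
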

\begin{proof}
We only prove that $f_1$ is increasing since the proof concerning $f_2$ is similar.
For any $x_1, x_2, y\in[0, 1]$ with $x_1<x_2$, we have
\[
f_1(x_2, y)-f_1(x_1, y)=x_2-x_1-\frac{1}{2}\Big(\mathbb{V}_{x_1}^{x_2}(\hat{\Gamma}_\phi)+\hat{\Gamma}_\phi(x_2)-\hat{\Gamma}_\phi(x_1)\Big).
\]
It follows from
\[
\mathbb{V}_{x_1}^{x_2}(\hat{\Gamma}_\phi)\leq\mathbb{V}_{x_1}^{x_2}({\rm id})+ \mathbb{V}_{x_1}^{x_2}(\Gamma_\phi)=(x_2-x_1)+(\Gamma_\phi(x_2)-\Gamma_\phi(x_1))
\]
that $f_1(x_2, y)-f_1(x_1, y)\geq 0$, \emph{i.e.}, $f_1$ is increasing in the first variable.

For any $y_1, y_2, x\in[0, 1]$ with $y_1<y_2$, we have
\[
f_1(x, y_2)-f_1(x, y_1)=\frac{1}{2}\Big(\mathbb{V}_{\phi^{-1}(y_1)}^{\phi^{-1}(y_2)}(\hat{\Gamma}_\phi)-\hat{\Gamma}_\phi(\phi^{-1}(y_2))+\hat{\Gamma}_\phi(\phi^{-1}(y_1))\Big)\geq 0,
\]
\emph{i.e.}, $f_1$ is also increasing in the second variable.
\end{proof}

Before proving that both $f_1$ and $f_2$ are $1$-Lipschitz, we need two lemmas.
\begin{lemm}\label{lemm-evofGamm}
Let $\phi\in\Phi$ and $\Gamma_\phi\in\Delta_\phi$. Then the following inequalities hold for all $t_1, t_2\in[0, 1]$ with $t_1<t_2$:
\begin{itemize}\setlength{\itemindent}{10pt}
\item[{\rm (i)}] $|\hat{\Gamma}_\phi(t_2)-\hat{\Gamma}_\phi(t_1)|\leq 2(\phi(t_2)-\phi(t_1))+ (\hat{\Gamma}_\phi(t_2)-\hat{\Gamma}_\phi(t_1))$;
\item[{\rm (ii)}] $|\tilde{\Gamma}_\phi(t_2)-\tilde{\Gamma}_\phi(t_1)|\leq 2(t_2-t_1)+(\tilde{\Gamma}_\phi(t_2)-\tilde{\Gamma}_\phi(t_1))$;
\item[{\rm (iii)}] $|\hat{\Gamma}_\phi(t_2)-\hat{\Gamma}_\phi(t_1)|+|\tilde{\Gamma}_\phi(t_2)-\tilde{\Gamma}_\phi(t_1)|\leq (t_2-t_1)+(\phi(t_2)-\phi(t_1))$.
\end{itemize}
\end{lemm}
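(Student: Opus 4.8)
The plan is to reduce all three inequalities to the defining bound (ii) of $\Delta_\phi$ by purely elementary manipulations, so no delicate analysis is needed. Throughout, fix $t_1<t_2$ in $[0,1]$ and abbreviate $\alpha:=t_2-t_1$, $\beta:=\phi(t_2)-\phi(t_1)$, $\gamma:=\Gamma_\phi(t_2)-\Gamma_\phi(t_1)$. Then $\alpha\geq 0$, $\beta\geq 0$, and property (ii) reads $0\leq\gamma\leq\alpha+\beta$. Setting $a:=\hat{\Gamma}_\phi(t_2)-\hat{\Gamma}_\phi(t_1)=\alpha-\gamma$ and $b:=\tilde{\Gamma}_\phi(t_2)-\tilde{\Gamma}_\phi(t_1)=\beta-\gamma$, the three claims become, respectively, $|a|\leq 2\beta+a$, $|b|\leq 2\alpha+b$, and $|a|+|b|\leq\alpha+\beta$.

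For (i), I would note the identity $|a|-a=2\max\{-a,0\}$, which turns the claim into $\max\{\gamma-\alpha,0\}\leq\beta$; since $\beta\geq 0$, this is nothing but the upper bound $\gamma\leq\alpha+\beta$ from (ii). Part (ii) is proved in exactly the same way after exchanging the roles of $\alpha$ and $\beta$: $|b|-b=2\max\{\gamma-\beta,0\}\leq 2\alpha$ is again equivalent to $\gamma\leq\alpha+\beta$.

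For (iii), I would invoke the elementary identity $|a|+|b|=\max\{|a+b|,|a-b|\}$, valid for all real $a,b$. Here $a-b=\alpha-\beta$, hence $|a-b|\leq\alpha+\beta$ because $\alpha,\beta\geq 0$; and $a+b=(\alpha+\beta)-2\gamma$, so $0\leq\gamma\leq\alpha+\beta$ forces $|a+b|\leq\alpha+\beta$. Taking the maximum gives (iii). If one prefers to avoid the max identity, the same conclusion follows by a short case split according to whether $\gamma\leq\min\{\alpha,\beta\}$, $\min\{\alpha,\beta\}\leq\gamma\leq\max\{\alpha,\beta\}$, or $\gamma\geq\max\{\alpha,\beta\}$, checking the (at most four) sign patterns of $a$ and $b$ directly.

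I do not anticipate any genuine obstacle: every step is a one-line consequence of property (ii) together with the nonnegativity of $\alpha$ and $\beta$. The only points worth stating carefully are the rewriting of $|a|-a$ (respectively $|b|-b$) as twice a positive part in parts (i)–(ii), and the absolute-value identity (or the case analysis) used in part (iii).
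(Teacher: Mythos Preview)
Your proof is correct. The underlying content is the same as the paper's---both arguments reduce everything to the defining bound $0\leq\gamma\leq\alpha+\beta$---but the presentations differ. The paper handles (i) by splitting on the sign of $\hat{\Gamma}_\phi(t_2)-\hat{\Gamma}_\phi(t_1)$ and handles (iii) by a four-way case split on the sign pattern of $(a,b)$. You instead package the same case distinctions into two algebraic identities: $|a|-a=2\max\{-a,0\}$ for parts (i)--(ii), and $|a|+|b|=\max\{|a+b|,|a-b|\}$ for part (iii). The net effect is a shorter, case-free write-up; the paper's version is more explicit but longer. Neither approach uses anything beyond property~(ii) of $\Delta_\phi$ and the nonnegativity of $\alpha,\beta$, so the two are equivalent in substance.
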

\begin{proof}
(i) We distinguish two possible cases.
\begin{itemize}
\item[\rm (a)] $\hat{\Gamma}_\phi(t_2)-\hat{\Gamma}_\phi(t_1)\geq 0$.

In this case, we have
\begin{eqnarray*}
|\hat{\Gamma}_\phi(t_2)-\hat{\Gamma}_\phi(t_1)|
&=&\hat{\Gamma}_\phi(t_2)-\hat{\Gamma}_\phi(t_1)\\
&\leq& 2(\phi(t_2)-\phi(t_1))+(\hat{\Gamma}_\phi(t_2)-\hat{\Gamma}_\phi(t_1)).
\end{eqnarray*}

\item[\rm (b)] $\hat{\Gamma}_\phi(t_2)-\hat{\Gamma}_\phi(t_1)<0$.

In this case, since $\Gamma_\phi(t_2)-\Gamma_\phi(t_1)\leq (t_2-t_1)+(\phi(t_2)-\phi(t_1))$, we have
$(\phi(t_2)-\phi(t_1))+(\hat{\Gamma}_\phi(t_2)-\hat{\Gamma}_\phi(t_1))\geq 0$. Hence, it holds that
\begin{eqnarray*}
|\hat{\Gamma}_\phi(t_2)-\hat{\Gamma}_\phi(t_1)|
&=&-(\hat{\Gamma}_\phi(t_2)-\hat{\Gamma}_\phi(t_1))\\
&\leq& 2(\phi(t_2)-\phi(t_1))+(\hat{\Gamma}_\phi(t_2)-\hat{\Gamma}_\phi(t_1)).
\end{eqnarray*}
\end{itemize}

(ii) The proof is similar to that of (i).

(iii) If both $\hat{\Gamma}_\phi(t_2)-\hat{\Gamma}_\phi(t_1)\geq 0$ and $\tilde{\Gamma}_\phi(t_2)-\tilde{\Gamma}_\phi(t_1)\geq 0$, then
\begin{eqnarray*}
&&|\hat{\Gamma}_\phi(t_2)-\hat{\Gamma}_\phi(t_1)|+|\tilde{\Gamma}_\phi(t_2)-\tilde{\Gamma}_\phi(t_1)|\\
&=&(t_2-t_1)+(\phi(t_2)-\phi(t_1))-2(\Gamma_\phi(t_2)-\Gamma_\phi(t_1))\\
&\leq& (t_2-t_1)+(\phi(t_2)-\phi(t_1)).
\end{eqnarray*}

If both $\hat{\Gamma}_\phi(t_2)-\hat{\Gamma}_\phi(t_1)< 0$ and $\tilde{\Gamma}_\phi(t_2)-\tilde{\Gamma}_\phi(t_1)< 0$, then it follows from $|\hat{\Gamma}_\phi(t_2)-\hat{\Gamma}_\phi(t_1)|\leq \phi(t_2)-\phi(t_1)$ and $|\tilde{\Gamma}_\phi(t_2)-\tilde{\Gamma}_\phi(t_1)|\leq t_2-t_1$ that the inequality holds.

If $\hat{\Gamma}_\phi(t_2)-\hat{\Gamma}_\phi(t_1)\geq 0$ and $\tilde{\Gamma}_\phi(t_2)-\tilde{\Gamma}_\phi(t_1)< 0$, then
\begin{eqnarray*}
|\hat{\Gamma}_\phi(t_2)-\hat{\Gamma}_\phi(t_1)|+|\tilde{\Gamma}_\phi(t_2)-\tilde{\Gamma}_\phi(t_1)|
&=& (t_2-t_1)-(\phi(t_2)-\phi(t_1))\\
&\leq& (t_2-t_1)+(\phi(t_2)-\phi(t_1)).
\end{eqnarray*}
Similarly, the inequality holds if $\hat{\Gamma}_\phi(t_2)-\hat{\Gamma}_\phi(t_1)<0$ and $\tilde{\Gamma}_\phi(t_2)-\tilde{\Gamma}_\phi(t_1)\geq 0$.
\end{proof}

The following estimations for the total variations of $\hat{\Gamma}_\phi$ and $\tilde{\Gamma}_\phi$ are crucial to upcoming results.
\begin{lemm}\label{lemm-evofTV}
Let $\phi\in\Phi$ and $\Gamma_\phi\in\Delta_\phi$. Then the following inequalities hold for all $t_1, t_2\in[0, 1]$ with $t_1<t_2$:
\begin{itemize}\setlength{\itemindent}{10pt}
\item[{\rm (i)}] $\mathbb{V}_{t_1}^{t_2}(\hat{\Gamma}_\phi)\leq 2(\phi(t_2)-\phi(t_1))+ (\hat{\Gamma}_\phi(t_2)-\hat{\Gamma}_\phi(t_1))$;
\item[{\rm (ii)}] $\mathbb{V}_{t_1}^{t_2}(\tilde{\Gamma}_\phi)\leq 2(t_2-t_1)+(\tilde{\Gamma}_\phi(t_2)-\tilde{\Gamma}_\phi(t_1))$;
\item[{\rm (iii)}] $\mathbb{V}_{t_1}^{t_2}(\hat{\Gamma}_\phi)+ \mathbb{V}_{t_1}^{t_2}(\tilde{\Gamma}_\phi)\leq (t_2-t_1)+ (\phi(t_2)-\phi(t_1))$.
\end{itemize}
\end{lemm}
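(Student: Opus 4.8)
The plan is to obtain each of the three inequalities from the corresponding pointwise estimate in Lemma~\ref{lemm-evofGamm} by applying that estimate on every subinterval of an arbitrary partition, summing, and then passing to the supremum that defines the total variation. Two elementary facts drive the argument. First, for any partition $\{x_i\}_{i=0}^n\in\mathcal P[t_1,t_2]$ and any real-valued $f$, the sum $\sum_{k=1}^n\bigl(f(x_k)-f(x_{k-1})\bigr)$ telescopes to $f(t_2)-f(t_1)$. Second, $\sum_{k=1}^n|f(x_k)-f(x_{k-1})|\le\mathbb{V}_{t_1}^{t_2}(f)$, and this sum does not decrease under refinement of the partition, since inserting a point $c$ into a subinterval $[a,b]$ replaces the term $|f(b)-f(a)|$ by $|f(b)-f(c)|+|f(c)-f(a)|$, which is at least as large by the triangle inequality.

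For (i), fix a partition $\{x_i\}_{i=0}^n$ of $[t_1,t_2]$ and sum the estimate of Lemma~\ref{lemm-evofGamm}(i) applied on each $[x_{k-1},x_k]$:
\[
\sum_{k=1}^n\bigl|\hat{\Gamma}_\phi(x_k)-\hat{\Gamma}_\phi(x_{k-1})\bigr|\le 2\sum_{k=1}^n\bigl(\phi(x_k)-\phi(x_{k-1})\bigr)+\sum_{k=1}^n\bigl(\hat{\Gamma}_\phi(x_k)-\hat{\Gamma}_\phi(x_{k-1})\bigr).
\]
Both sums on the right telescope, so the right-hand side equals the constant $2(\phi(t_2)-\phi(t_1))+(\hat{\Gamma}_\phi(t_2)-\hat{\Gamma}_\phi(t_1))$; taking the supremum over all partitions gives (i). Inequality (ii) follows in exactly the same way from Lemma~\ref{lemm-evofGamm}(ii), the only change being that the first telescoping sum is now $\sum_{k=1}^n(x_k-x_{k-1})=t_2-t_1$.

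For (iii), the same termwise procedure---now via Lemma~\ref{lemm-evofGamm}(iii) on each $[x_{k-1},x_k]$---shows that for every partition $\{x_i\}_{i=0}^n$ of $[t_1,t_2]$,
\[
\sum_{k=1}^n\bigl|\hat{\Gamma}_\phi(x_k)-\hat{\Gamma}_\phi(x_{k-1})\bigr|+\sum_{k=1}^n\bigl|\tilde{\Gamma}_\phi(x_k)-\tilde{\Gamma}_\phi(x_{k-1})\bigr|\le (t_2-t_1)+(\phi(t_2)-\phi(t_1)).
\]
The step requiring the most care is passing from this to a bound on $\mathbb{V}_{t_1}^{t_2}(\hat{\Gamma}_\phi)+\mathbb{V}_{t_1}^{t_2}(\tilde{\Gamma}_\phi)$: the two total variations are suprema over possibly different partitions, so one cannot simply take a supremum in the displayed inequality. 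I would resolve this by a common-refinement argument. Given $\varepsilon>0$, choose partitions $P_1,P_2$ of $[t_1,t_2]$ whose associated sums for $\hat{\Gamma}_\phi$ and for $\tilde{\Gamma}_\phi$ exceed $\mathbb{V}_{t_1}^{t_2}(\hat{\Gamma}_\phi)-\varepsilon$ and $\mathbb{V}_{t_1}^{t_2}(\tilde{\Gamma}_\phi)-\varepsilon$ respectively; on the common refinement $P:=P_1\cup P_2$ neither sum has decreased, so evaluating the displayed inequality at $P$ yields $\mathbb{V}_{t_1}^{t_2}(\hat{\Gamma}_\phi)+\mathbb{V}_{t_1}^{t_2}(\tilde{\Gamma}_\phi)-2\varepsilon<(t_2-t_1)+(\phi(t_2)-\phi(t_1))$, and letting $\varepsilon\downarrow 0$ establishes (iii). (Equivalently, $\sup_{P_1}a(P_1)+\sup_{P_2}b(P_2)=\sup_{P}\bigl(a(P)+b(P)\bigr)$ for functionals $a,b$ monotone under refinement, and the right-hand side is controlled by the displayed inequality.) Beyond this bookkeeping, the proof is just Lemma~\ref{lemm-evofGamm} together with telescoping, so no further genuine obstacle is anticipated.
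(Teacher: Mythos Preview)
Your proof is correct and follows essentially the same approach as the paper: apply Lemma~\ref{lemm-evofGamm} on each subinterval of an arbitrary partition, telescope, and pass to the supremum, with an $\varepsilon$/common-refinement argument for part~(iii). If anything, you are slightly more explicit than the paper, which simply asserts the existence of a single partition on which both variation sums are within $\varepsilon$ of their suprema without spelling out the refinement step.
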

\begin{proof}
(i) For any $\{x_i\}_{i=0}^n\in\mathcal P[t_1, t_2]$, it follows from Lemma~\ref{lemm-evofGamm}(i) that
\begin{eqnarray*}
\sum_{i=1}^n|\hat{\Gamma}_\phi(x_i)-\hat{\Gamma}_\phi(x_{i-1})|
&\leq& \sum_{i=1}^n\bigg(2(\phi(x_{i})-\phi(x_{i-1}))+(\hat{\Gamma}_\phi(x_i)-\hat{\Gamma}_\phi(x_{i-1}))\bigg)\\
&=& 2(\phi(t_2)-\phi(t_1))+(\hat{\Gamma}_\phi(t_2)-\hat{\Gamma}_\phi(t_1)).
\end{eqnarray*}
Hence, $\mathbb{V}_{t_1}^{t_2}(\hat{\Gamma}_\phi)\leq 2(\phi(t_2)-\phi(t_1))+ (\hat{\Gamma}_\phi(t_2)-\hat{\Gamma}_\phi(t_1))$.

(ii) The proof is similar to that of (i).

(iii) For any $\epsilon>0$, there exists $\{x_i\}_{i=0}^n\in\mathcal P[t_1, t_2]$ such that
\[
\mathbb{V}_{t_1}^{t_2}(\hat{\Gamma}_\phi)<\sum_{i=1}^n|\hat{\Gamma}_\phi(x_i)-\hat{\Gamma}_\phi(x_{i-1})|+\epsilon
\]
and
\[
\mathbb{V}_{t_1}^{t_2}(\tilde{\Gamma}_\phi)<\sum_{i=1}^n|\tilde{\Gamma}_\phi(x_i)-\tilde{\Gamma}_\phi(x_{i-1})|+\epsilon~.
\]
It follows from Lemma~\ref{lemm-evofGamm}(iii) that
\begin{eqnarray*}
&&\sum_{i=1}^n|\hat{\Gamma}_\phi(x_i)-\hat{\Gamma}_\phi(x_{i-1})|+ \sum_{i=1}^n|\tilde{\Gamma}_\phi(x_i)-\tilde{\Gamma}_\phi(x_{i-1})|\\
&=&\sum_{i=1}^n\Big(|\hat{\Gamma}_\phi(x_i)-\hat{\Gamma}_\phi(x_{i-1})|+|\tilde{\Gamma}_\phi(x_i)-\tilde{\Gamma}_\phi(x_{i-1})|\Big)\\
&\leq& \sum_{i=1}^n\Big((x_i-x_{i-1})+(\phi(x_{i})-\phi(x_{i-1}))\Big)
= (t_2-t_1)+(\phi(t_2)-\phi(t_1)).
\end{eqnarray*}
Hence, $\mathbb{V}_{t_1}^{t_2}(\hat{\Gamma}_\phi)+\mathbb{V}_{t_1}^{t_2}(\tilde{\Gamma}_\phi)<(t_2-t_1)+(\phi(t_2)-\phi(t_1))+2\epsilon$.
Therefore, the conclusion holds due to the arbitrariness of $\epsilon$.
\end{proof}

\begin{prot}\label{prot-lip}
Both $f_1$ and $f_2$ are $1$-Lipschitz.
\end{prot}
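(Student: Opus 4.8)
The plan is to reduce the $1$-Lipschitz property to bounds on one-variable increments and then feed in Lemma~\ref{lemm-evofTV}. First I would recall the elementary fact that a map $Q\colon[0,1]^2\to[0,1]$ which is increasing in each variable is $1$-Lipschitz as soon as $Q(x_2,y)-Q(x_1,y)\le x_2-x_1$ and $Q(x,y_2)-Q(x,y_1)\le y_2-y_1$ for all $x_1<x_2$, $y_1<y_2$ and all $x,y\in[0,1]$: one passes from $(x_1,y_1)$ to $(x_2,y_2)$ along the two axis-parallel segments and applies the triangle inequality. Since Proposition~\ref{prot-agop} already tells us that $f_1$ and $f_2$ are increasing, it then suffices to verify these upper bounds on the partial increments.

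For $f_1$ in the first variable I would reuse the identity obtained in the proof of Proposition~\ref{prot-agop},
\[
f_1(x_2,y)-f_1(x_1,y)=(x_2-x_1)-\tfrac12\bigl(\mathbb{V}_{x_1}^{x_2}(\hat\Gamma_\phi)+\hat\Gamma_\phi(x_2)-\hat\Gamma_\phi(x_1)\bigr),
\]
and note that the bracket is nonnegative because $\mathbb{V}_{x_1}^{x_2}(\hat\Gamma_\phi)\ge|\hat\Gamma_\phi(x_2)-\hat\Gamma_\phi(x_1)|$; hence the increment is at most $x_2-x_1$. For the second variable, writing $s_i=\phi^{-1}(y_i)$, the same proof gives $f_1(x,y_2)-f_1(x,y_1)=\tfrac12\bigl(\mathbb{V}_{s_1}^{s_2}(\hat\Gamma_\phi)-\hat\Gamma_\phi(s_2)+\hat\Gamma_\phi(s_1)\bigr)$, which Lemma~\ref{lemm-evofTV}(i) bounds by $\phi(s_2)-\phi(s_1)=y_2-y_1$. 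For $f_2$ I would proceed symmetrically: using the additivity $\mathbb{V}_a^b=\mathbb{V}_a^c+\mathbb{V}_c^b$ together with the convention $\mathbb{V}_b^a=-\mathbb{V}_a^b$ (so that no case distinction on the sign of $\phi^{-1}(y)-x$ is needed), one finds $f_2(x_2,y)-f_2(x_1,y)=\tfrac12\bigl(\mathbb{V}_{x_1}^{x_2}(\tilde\Gamma_\phi)-\tilde\Gamma_\phi(x_2)+\tilde\Gamma_\phi(x_1)\bigr)\le x_2-x_1$ by Lemma~\ref{lemm-evofTV}(ii), and $f_2(x,y_2)-f_2(x,y_1)=(y_2-y_1)-\tfrac12\bigl(\mathbb{V}_{s_1}^{s_2}(\tilde\Gamma_\phi)+\tilde\Gamma_\phi(s_2)-\tilde\Gamma_\phi(s_1)\bigr)\le y_2-y_1$ because $\mathbb{V}_{s_1}^{s_2}(\tilde\Gamma_\phi)\ge -(\tilde\Gamma_\phi(s_2)-\tilde\Gamma_\phi(s_1))$. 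Combining the four estimates with Proposition~\ref{prot-agop} yields the claim.

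I do not expect a real obstacle here: the analytic content is entirely packaged in Lemma~\ref{lemm-evofTV}, and what remains is bookkeeping — computing the one-variable increments of $f_1$ and $f_2$ correctly (the piecewise nature of their definitions is immaterial once the signed-total-variation convention is in force) and observing that the two increments not covered directly by the lemma are handled by the trivial estimate $\mathbb{V}_a^b(g)\ge|g(b)-g(a)|$.
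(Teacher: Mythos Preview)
Your proposal is correct and follows essentially the same approach as the paper: compute the one-variable increments of $f_1$ (and, symmetrically, $f_2$), bound the ``nontrivial'' increment via Lemma~\ref{lemm-evofTV}(i) (resp.\ (ii)) and the other via $\mathbb{V}_a^b(g)\ge|g(b)-g(a)|$, and then combine with the increasingness from Proposition~\ref{prot-agop}. The paper's proof is nearly identical, merely omitting the explicit $f_2$ computations under a ``similarly'' and leaving the reduction from coordinatewise bounds to the full $1$-Lipschitz inequality implicit.
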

\begin{proof}
For any $x_1, x_2, y\in[0, 1]$ with $x_1<x_2$, we have
\[
f_1(x_2, y)-f_1(x_1, y)=(x_2-x_1)-\frac{1}{2}\Big(\mathbb{V}_{x_1}^{x_2}(\hat{\Gamma}_\phi)+\hat{\Gamma}_\phi(x_2)-\hat{\Gamma}_\phi(x_1)\Big)\leq x_2-x_1.
\]
Since $f_1$ is increasing, it follows that $f_1$ is $1$-Lipschitz w.r.t. the first variable.

For any $y_1, y_2, x\in[0, 1]$ with $y_1<y_2$, it follows from Lemma~\ref{lemm-evofTV}(i) that
\begin{eqnarray*}
f_1(x, y_2)-f_1(x, y_1)
&=&\frac{1}{2}\Big(\mathbb{V}_{\phi^{-1}(y_1)}^{\phi^{-1}(y_2)}(\hat{\Gamma}_\phi)-\hat{\Gamma}_\phi(\phi^{-1}(y_2))+\hat{\Gamma}_\phi(\phi^{-1}(y_1))\Big)\\
&\leq& y_2-y_1.
\end{eqnarray*}
Since $f_1$ is increasing, $f_1$ is also $1$-Lipschitz w.r.t. the second variable.

Similarly, we can show that $f_2$ is $1$-Lipschitz.
\end{proof}

Based on $f_1$ and $f_2$, we define functions $C_1, C_2\colon [0, 1]^2\to [0, 1]$ as follows:
\begin{equation*}
C_1(x, y)=\min\{x, y, f_1(x, y)\}
\end{equation*}
and
\begin{equation*}
C_2(x, y)=\min\{x, y, f_2(x, y)\}.
\end{equation*}
Interestingly, $C_1$ and $C_2$ are copulas with curvilinear section $\Gamma_\phi$.
\begin{thrm}
Let $\phi\in\Phi$ and $\Gamma_\phi\in\Delta_\phi$. Then $C_1, C_2\in\mathbb{C}_{\Gamma_\phi}$.
\end{thrm}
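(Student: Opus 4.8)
The plan is to check, for $C_1$, the three requirements for membership in $\mathbb{C}_{\Gamma_\phi}$: the boundary condition (C1), the curvilinear‑section identity $C_1(t,\phi(t))=\Gamma_\phi(t)$, and the $2$‑increasing property (C2); the assertion for $C_2$ will then follow by an entirely parallel computation. The main tool is an additive reformulation of $f_1$.

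Using the additivity of total variation, $\mathbb{V}^{x}_{\phi^{-1}(y)}(\hat{\Gamma}_{\phi})=\mathbb{V}^{x}_{0}(\hat{\Gamma}_{\phi})-\mathbb{V}^{\phi^{-1}(y)}_{0}(\hat{\Gamma}_{\phi})$, and regrouping the terms in the definition of $f_1$ gives
\[
f_1(x,y)=a(x)+b(y),\qquad a(x):=f_1(x,0),\quad b(y):=f_1(0,y),
\]
where $a(x)=x-\frac{1}{2}\big(\mathbb{V}^{x}_{0}(\hat{\Gamma}_{\phi})+\hat{\Gamma}_{\phi}(x)\big)$ and $b(y)=\frac{1}{2}\big(\mathbb{V}^{\phi^{-1}(y)}_{0}(\hat{\Gamma}_{\phi})-\hat{\Gamma}_{\phi}(\phi^{-1}(y))\big)$. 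By Propositions~\ref{prot-agop} and~\ref{prot-lip} (restricted to $y=0$ and to $x=0$), $a$ and $b$ are increasing and $1$‑Lipschitz with $a(0)=b(0)=0$; in particular $a,b\ge0$ and $x\mapsto x-a(x)$, $y\mapsto y-b(y)$ are increasing. Since $\hat{\Gamma}_{\phi}(0)=\hat{\Gamma}_{\phi}(1)=0$, the bound $\mathbb{V}^{1}_{x}(\hat{\Gamma}_{\phi})\ge|\hat{\Gamma}_{\phi}(1)-\hat{\Gamma}_{\phi}(x)|=\hat{\Gamma}_{\phi}(x)$ yields $a(x)+b(1)\ge x$, and Lemma~\ref{lemm-evofTV}(i) on $[\phi^{-1}(y),1]$ yields $a(1)+b(y)\ge y$. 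Hence, with $C_1(x,y)=\min\{x,y,a(x)+b(y)\}$: $C_1(0,y)=C_1(x,0)=0$ because $a,b\ge0$; $C_1(x,1)=x$ and $C_1(1,y)=y$ by the two previous inequalities; $C_1$ maps into $[0,1]$; and, since $\mathbb{V}^{t}_{t}(\hat{\Gamma}_{\phi})=0$ gives $f_1(t,\phi(t))=t-\hat{\Gamma}_{\phi}(t)=\Gamma_\phi(t)$ while $\Gamma_\phi(t)\le\min\{t,\phi(t)\}$ by property~(i), we also get $C_1(t,\phi(t))=\Gamma_\phi(t)$. Thus (C1) and the section identity hold.

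For (C2) I would first record the elementary fact that $(x,y)\mapsto\min\{u(x),v(y)\}$ is $2$‑increasing for any increasing $u,v\colon[0,1]\to\mathbb R$, because its $V$‑volume over $[x_1,x_2]\times[y_1,y_2]$ equals that of $M$ over $[u(x_1),u(x_2)]\times[v(y_1),v(y_2)]$, which is nonnegative since $M$ is $2$‑increasing. Applying this with $u(x)=x-a(x)$ and $v(y)=b(y)$, and adding the $x$‑only term $a(x)$ (whose volume over every rectangle is $0$), we obtain that
\[
D(x,y):=\min\{x,a(x)+b(y)\}=a(x)+\min\{x-a(x),b(y)\}
\]
is $2$‑increasing. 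Moreover $D$ is increasing in each variable, $y\mapsto D(x,y)$ is $1$‑Lipschitz, and $D(x,y)-y=\min\{x-y,\ a(x)-(y-b(y))\}$ is increasing in $x$ and decreasing in $y$; therefore $\{(x,y):D(x,y)\le y\}=\{(x,y):y\ge\sigma(x)\}$ for some increasing $\sigma\colon[0,1]\to[0,1]$, and $C_1=\min\{D,y\}$ equals $D$ on this set and equals $y$ off it.

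It then remains to verify $V_{C_1}(R)\ge0$ for each rectangle $R=[x_1,x_2]\times[y_1,y_2]$. Because $\sigma$ is increasing, the set of corners of $R$ lying in $\{y\ge\sigma(x)\}$ is one of six nested configurations (from $\emptyset$ up to all four corners), and in each case, after substituting $C_1=D$ or $C_1=y$ at the appropriate corners, $V_{C_1}(R)$ reduces to a quantity that is nonnegative by one of the facts already in hand: $V_D(R)\ge0$; the monotonicity of $D$ in $x$; the monotonicity of $y\mapsto y-D(x_1,y)$; the inequality $D\le y$ on the on‑set corners; or, in the one configuration with a single off‑set corner, $V_{C_1}(R)\ge V_D(R)$ via $D(x_2,y_1)>y_1$. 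This establishes $C_1\in\mathbb{C}_{\Gamma_\phi}$. For $C_2$ the same regrouping gives $f_2(x,y)=f_2(x,0)+f_2(0,y)$, with the two summands increasing, $1$‑Lipschitz and vanishing at $0$ (Lemma~\ref{lemm-evofTV}(ii) now replacing (i) in the boundary estimates), so $C_2(x,y)=\min\{x,y,f_2(x,0)+f_2(0,y)\}$ is covered verbatim by the argument above. I expect the real work to be in this last step: confirming that the list of corner configurations is exhaustive once $\sigma$ is known to be increasing, and handling the tie cases (corners where $D=y$) so that each configuration indeed collapses as claimed.
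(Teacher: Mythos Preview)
Your proof is correct and takes a genuinely different route from the paper. The paper never isolates the additive decomposition $f_1(x,y)=a(x)+b(y)$; instead, after checking (C1) and the section identity much as you do, it verifies $V_{C_1}(R)\ge0$ by cutting an arbitrary rectangle along the curve $y=\phi(x)$ into at most three pieces (entirely below the curve, entirely above it, or of the special form $[x_1,x_2]\times[\phi(x_1),\phi(x_2)]$) and then treats each type with the monotonicity and $1$-Lipschitz property of $f_1$, the cross-diagonal type collapsing via the identity $f_1(x_1,\phi(x_2))+f_1(x_2,\phi(x_1))=\Gamma_\phi(x_1)+\Gamma_\phi(x_2)$. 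Your argument instead uses the separability to show first that $D=\min\{x,a(x)+b(y)\}$ is $2$-increasing through the elementary lemma on $\min\{u(x),v(y)\}$, and then that $\min\{D,y\}$ inherits $2$-increasingness by a level-set/corner-configuration analysis that never refers to $\phi$ again. The payoff of your approach is portability: once the boundary estimates $a(x)+b(1)\ge x$ and $a(1)+b(y)\ge y$ are in place, the copula conclusion depends only on $a,b$ being increasing, $1$-Lipschitz and vanishing at $0$, so the same scheme covers any $\min\{x,y,a(x)+b(y)\}$ of this shape. The paper's decomposition is more bespoke but dovetails with the $\phi$-splice machinery used later. Your worry about tie corners is harmless: at a corner with $D=y$ both formulas for $C_1$ coincide, so you may place it on either side of the level set without disturbing the six-case enumeration, and the exhaustiveness of those six cases follows exactly from the monotonicity of $D(x,y)-y$ in each variable that you already established.
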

\begin{proof}
We only prove that $C_1\in \mathbb{C}_{\Gamma_\phi}$ since the proof of $C_2\in \mathbb{C}_{\Gamma_\phi}$ is similar.
It is clear that $C_{1}(x, \phi(x))=\Gamma_\phi(x)$ for all $x\in[0, 1]$. It then remains to prove that $C_1$ is a copula.

\emph{Boundary condition:} For any $x\in[0, 1]$, we have
\[
\mathbb{V}^{x}_{\phi^{-1}(1)}(\hat{\Gamma}_{\phi})+\hat{\Gamma}_{\phi}(x)+\hat{\Gamma}_{\phi}(\phi^{-1}(1))
=\hat{\Gamma}_{\phi}(x)-\mathbb{V}^{1}_{x}(\hat{\Gamma}_{\phi})\leq 0.
\]
Hence, $f_1(x, 1)\geq x$, and thus $C_1(x, 1)=x$. It follows from Lemma~\ref{lemm-evofTV}(i) that
\[
\mathbb{V}^{1}_{\phi^{-1}(x)}(\hat{\Gamma}_{\phi})+\hat{\Gamma}_{\phi}(1)+\hat{\Gamma}_{\phi}(\phi^{-1}(x))\leq 2(\phi(1)-x)+2\hat{\Gamma}_{\phi}(1)=2(1-x).
\]
Hence, $f_1(1, x)\geq x$, and thus $C_1(1, x)=x$. Since $f_1$ is increasing, $C_1$ is also increasing, which implies that
$C_1(0, x)=C_1(x, 0)=0$.

\emph{$2$-increasingness:} Note that any rectangle $R=[a, b]\times [c, d]$ can be partitioned into at most three non-overlapping rectangles $R_i$ and $V(R)$ is the sum of all $V(R_i)$,
where each $R_i$ belongs to one of the three types of rectangles:

\noindent Type 1: $R\subset\{(x, y)\mid y\leq\phi(x)\}$, \emph{i.e.}, $R$ is below the curve $\{(x, \phi(x))\mid x\in [0, 1]\}$;

\noindent Type 2: $R\subset\{(x, y)\mid y\geq\phi(x)\}$, \emph{i.e.}, $R$ is above the curve $\{(x, \phi(x))\mid x\in [0, 1]\}$;

\noindent Type 3: $R=[x_1, x_2]\times [\phi(x_1), \phi(x_2)]$, \emph{i.e.}, $R$ has two vertices on the curve $\{(x, \phi(x))\mid x\in [0, 1]\}$.

\noindent We only need to prove that $V(R)\geq 0$ for each of the three types of rectangles $R=[x_1, x_2]\times [y_1, y_2]$.

\noindent Type 1: $R=[x_1, x_2]\times[y_1, y_2]\subset\{(x, y)\mid y\leq\phi(x)\}$.

In this case, we have $C_{1}(x, y)=\min\{y, f_1(x, y)\}$ for any $(x, y)\in R$.

If $C_1(x_1, y_1)=y_1$, then it follows from the increasingness of $C_1$ that $V(R)=C_1(x_2, y_2)-C_1(x_1, y_2)\geq 0$.

For the case that $C_1(x_1, y_1)=f_1(x_1, y_1)$, there are two subcases to be considered:
\begin{itemize}
\item[\rm (a)] $C_1(x_2, y_2)=y_2$.

In this subcase, we have
\[
V(R)\geq f_1(x_1, y_1)+y_2-y_1-f_1(x_1, y_2)\geq 0,
\]
where the last inequality is due to the fact that $f_1$ is $1$-Lipschitz.

\item[\rm (b)] $C_1(x_2, y_2)=f_1(x_2, y_2)$.

In this subcase, we have
\[
V(R)\geq f_1(x_1, y_1)+f_1(x_2, y_2)-f_1(x_1, y_2)-f_1(x_2, y_1)=0.
\]
\end{itemize}

\noindent Type 2: $R=[x_1, x_2]\times[y_1, y_2]\subset\{(x, y)\mid y\geq\phi(x)\}$.

If $C_1(x_1, y_1)=x_1$ or $C_1(x_1, y_1)=y_1$, then the increasingness of $C_1$ implies that $V(R)\geq 0$. So we suppose that
$C_1(x_1, y_1)=f_1(x_1, y_1)$. There are three subcases to be considered:
\begin{itemize}
\item[\rm (a)] $C_1(x_2, y_2)=x_2$.

In this subcase, we have
\[
V(R)\geq f_1(x_1, y_1)+x_2-x_1-f_1(x_2, y_1)\geq 0.
\]

\item[\rm (b)] $C_1(x_2, y_2)=y_2$.

In this subcase, we have
\[
V(R)\geq f_1(x_1, y_1)+y_2-y_1-f_1(x_1, y_2)\geq 0.
\]

\item[\rm (c)] $C_1(x_2, y_2)=f_1(x_2, y_2)$.

In this subcase, we have
\[
V(R)\geq f_1(x_1, y_1)+f_1(x_2, y_2)-f_1(x_1, y_2)-f_1(x_2, y_1)=0.
\]
\end{itemize}

\noindent Type 3: $R=[x_1, x_2]\times [\phi(x_1), \phi(x_2)]$.

In this case, we have $C_1(x_1, \phi(x_1))=\Gamma_\phi(x_1)$ and $C_1(x_2, \phi(x_2))=\Gamma_\phi(x_2)$. Hence,
\begin{eqnarray*}
V(R)&\geq& \Gamma_\phi(x_1)+\Gamma_\phi(x_2)-f_1(x_1, \phi(x_2))-f_1(x_2, \phi(x_1))\\
&=& \Gamma_\phi(x_1)+\Gamma_\phi(x_2)-x_1+\frac{1}{2}\Big(\mathbb{V}^{x_1}_{x_2}(\hat{\Gamma}_{\phi})+\hat{\Gamma}_{\phi}(x_1)+\hat{\Gamma}_{\phi}(x_2)\Big)\\
&&- x_2+ \frac{1}{2}\Big(\mathbb{V}^{x_2}_{x_1}(\hat{\Gamma}_{\phi})+\hat{\Gamma}_{\phi}(x_1)+\hat{\Gamma}_{\phi}(x_2)\Big)\\
&=& \Gamma_\phi(x_1)+\Gamma_\phi(x_2)-\Gamma_\phi(x_1)-\Gamma_\phi(x_2)=0.
\end{eqnarray*}
We conclude that $C_1\in \mathbb{C}_{\Gamma_\phi}$.
\end{proof}

The $\phi$-splice $C_2\MySs_\phi C_1$~\cite{JDHD21} of the two copulas $C_1$ and $C_2$ is defined by
\[
(C_2\MySs_\phi C_1)(x, y)= \left \{
        \begin {array}{ll}
         C_1(x, y)
                  &\quad \text{if\ \ $y\leq\phi(x)$}  \\[2mm]
         C_2(x, y)
                  &\quad \text{if\ \ $y\geq\phi(x)$.}
        \end {array}
       \right.
\]
It is known from~\cite{XWZJ23} that $C_2\MySs_\phi C_1$ is a quasi-copula.
Now we prove that $C_2\MySs_\phi C_1$ is nothing else but the pointwise supremum of $\mathbb{C}_{\Gamma_\phi}$.

\begin{thrm}
Let $\phi\in\Phi$ and $\Gamma_\phi\in\Delta_\phi$. Then it holds that

\centerline{$(C_2\MySs_\phi C_1)(x, y)=\sup\{C(x, y)\mid C\in\mathbb{C}_{\Gamma_\phi}\}$ for all $(x, y)\in[0, 1]^2$.}
\end{thrm}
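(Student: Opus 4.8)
The plan is to prove two inequalities whose conjunction gives the statement. Write $S:=C_2\MySs_\phi C_1$. By the preceding theorem $C_1,C_2\in\mathbb C_{\Gamma_\phi}$, and by the definition of the $\phi$-splice, $S$ coincides with $C_1$ on $\{(x,y):y\le\phi(x)\}$ and with $C_2$ on $\{(x,y):y\ge\phi(x)\}$; hence on each region one of $C_1,C_2$ realizes the value $S(x,y)$, so $\sup\{C(x,y):C\in\mathbb C_{\Gamma_\phi}\}\ge S(x,y)$ for all $(x,y)$. The substance of the proof is therefore the reverse inequality $C(x,y)\le S(x,y)$ for every $C\in\mathbb C_{\Gamma_\phi}$. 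Since every copula satisfies $C\le M$, we have $C(x,y)\le\min\{x,y\}$ automatically, so it suffices to show
\[
C(x,y)\le f_1(x,y)\ \text{when}\ y\le\phi(x),\qquad C(x,y)\le f_2(x,y)\ \text{when}\ y\ge\phi(x).
\]

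The core is a telescoping estimate valid for any copula $C$, which I would isolate as a lemma: for increasing points $\alpha_0<\alpha_1<\cdots<\alpha_{2m}$ and $\beta_0<\beta_1<\cdots<\beta_{2m}$ of $[0,1]$ with $m\ge1$,
\[
C(\alpha_{2m},\beta_0)\le\sum_{k=1}^{m}(\alpha_{2k}-\alpha_{2k-1})+\sum_{k=1}^{m}C(\alpha_{2k-1},\beta_{2k-1})-\sum_{k=1}^{m-1}C(\alpha_{2k},\beta_{2k}).
\]
For $m=1$ this is immediate from monotonicity in the second variable and $1$-Lipschitzness in the first: $C(\alpha_2,\beta_0)\le C(\alpha_2,\beta_1)\le C(\alpha_1,\beta_1)+(\alpha_2-\alpha_1)$. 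For $m\ge2$ I would apply $2$-increasingness of $C$ to the rectangle $[\alpha_{2m-2},\alpha_{2m}]\times[\beta_0,\beta_{2m-1}]$ to get $C(\alpha_{2m},\beta_0)\le C(\alpha_{2m},\beta_{2m-1})+C(\alpha_{2m-2},\beta_0)-C(\alpha_{2m-2},\beta_{2m-1})$, then bound $C(\alpha_{2m},\beta_{2m-1})\le C(\alpha_{2m-1},\beta_{2m-1})+(\alpha_{2m}-\alpha_{2m-1})$ ($1$-Lipschitz) and $C(\alpha_{2m-2},\beta_{2m-1})\ge C(\alpha_{2m-2},\beta_{2m-2})$ (monotonicity), and feed $C(\alpha_{2m-2},\beta_0)$ into the induction hypothesis for the truncated sequences $\alpha_0,\dots,\alpha_{2m-2}$ and $\beta_0,\dots,\beta_{2m-2}$; the terms recombine exactly into the asserted right-hand side. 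Running the same lemma on the copula $(x,y)\mapsto C(y,x)$ and relabelling yields the mirror estimate $C(\alpha_0,\beta_{2m})\le\sum_{k=1}^{m}(\beta_{2k}-\beta_{2k-1})+\sum_{k=1}^{m}C(\alpha_{2k-1},\beta_{2k-1})-\sum_{k=1}^{m-1}C(\alpha_{2k},\beta_{2k})$.

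For the case $y\le\phi(x)$, fix $C\in\mathbb C_{\Gamma_\phi}$, put $u=\phi^{-1}(y)\le x$ (the subcase $u=x$ being an equality), take an arbitrary even partition $u=a_0<a_1<\cdots<a_{2m}=x$ of $[u,x]$, and apply the lemma with $\alpha_i=a_i$ and $\beta_i=\phi(a_i)$; then $\alpha_{2m}=x$, $\beta_0=y$, and $C(\alpha_i,\beta_i)=C(a_i,\phi(a_i))=\Gamma_\phi(a_i)$. Substituting $\hat\Gamma_\phi(t)=t-\Gamma_\phi(t)$ turns the right-hand side into $x-\sum_{k=1}^{m}\hat\Gamma_\phi(a_{2k-1})+\sum_{k=1}^{m-1}\hat\Gamma_\phi(a_{2k})$; taking the infimum over all such partitions and invoking Remark~\ref{remk-variation}, which identifies $\frac12\bigl(\mathbb V_u^x(\hat\Gamma_\phi)+\hat\Gamma_\phi(x)+\hat\Gamma_\phi(u)\bigr)$ with the supremum of $\sum_{k=1}^{m}\hat\Gamma_\phi(a_{2k-1})-\sum_{k=1}^{m-1}\hat\Gamma_\phi(a_{2k})$ over even partitions of $[u,x]$, gives exactly $C(x,y)\le f_1(x,y)$. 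The case $y\ge\phi(x)$ is symmetric: use $v=\phi^{-1}(y)\ge x$, a partition of $[x,v]$, the mirror estimate, and $\tilde\Gamma_\phi(t)=\phi(t)-\Gamma_\phi(t)$ in place of $\hat\Gamma_\phi$, to obtain $C(x,y)\le f_2(x,y)$.

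I expect the main difficulty to be organizational rather than conceptual: making the induction in the telescoping lemma close, which rests on selecting precisely the rectangle $[\alpha_{2m-2},\alpha_{2m}]\times[\beta_0,\beta_{2m-1}]$ and checking that all three elementary bounds (the $2$-increasing inequality, $1$-Lipschitzness, monotonicity) point in the required direction; and then matching the recombined sum against the explicit total-variation expression of Remark~\ref{remk-variation}, in particular verifying that the partition endpoints $a_0=u$ and $a_{2m}=x$ drop out of the even-index sum so that no spurious $\hat\Gamma_\phi(u)$ or $\hat\Gamma_\phi(x)$ remains. Granting the lemma, both the reduction of the ``$\ge$'' half to $C_1,C_2\in\mathbb C_{\Gamma_\phi}$ and the absorption of $\min\{x,y\}$ via $C\le M$ are routine.
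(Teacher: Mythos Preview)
Your proposal is correct and follows essentially the same strategy as the paper: reduce to $C(x,y)\le f_1(x,y)$ for $y\le\phi(x)$ (resp.\ $f_2$ for $y\ge\phi(x)$), extract from $2$-increasingness the inequality $\sum_{k=1}^m\hat\Gamma_\phi(a_{2k-1})-\sum_{k=1}^{m-1}\hat\Gamma_\phi(a_{2k})\le x-C(x,y)$ for every even partition of $[\phi^{-1}(y),x]$, and then invoke Remark~\ref{remk-variation}. The only difference is packaging: where you isolate an inductive telescoping lemma built from $2$-increasingness, $1$-Lipschitzness and monotonicity, the paper writes down in a single line a sum of $C$-volumes of explicit rectangles (reaching out to the boundary of $[0,1]^2$) whose nonnegativity collapses directly to $x-C(x,y)+\sum_{k=1}^{m-1}\hat\Gamma_\phi(x_{2k})-\sum_{k=1}^m\hat\Gamma_\phi(x_{2k-1})\ge 0$.
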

\begin{proof}
Consider $(x, y)\in[0, 1]^2$. We only prove the equality for the case that $y\leq\phi(x)$ (the proof is similar if $y\geq\phi(x)$).
As $C_1\in\mathbb{C}_{\Gamma_\phi}$, we need to show that $C(x, y)\leq C_1(x, y)$ for all $C\in\mathbb{C}_{\Gamma_\phi}$, or equivalently,
\[C(x, y)\leq x-\frac{1}{2}\Big(\mathbb{V}^{x}_{\phi^{-1}(y)}(\hat{\Gamma}_{\phi})+\hat{\Gamma}_{\phi}(x)+\hat{\Gamma}_{\phi}(\phi^{-1}(y)) \Big).\]
Indeed, for any $\{x_i\}_{i=0}^{2m}\in\mathcal P[\phi^{-1}(y), x]$, we have
\begin{eqnarray*}
0
&\leq& V_C([x_{2m-1}, x]\times[y, 1])+V_C([0, x_1]\times[y, \phi(x_1)])+\sum^{m-1}_{k=1}V_C([x_{2k-1}, x_{2k+1}]\times[y, \phi(x_{2k+1})]) \\
&& +\sum^{m-1}_{k=1}V_C([0, x_{2k}]\times[\phi(x_{2k}), \phi(x_{2k+1})])+\sum^{m-1}_{k=1}V_C([x_{2k-1}, x_{2k}]\times[\phi(x_{2k+1}), 1])\\
&=& x-C(x, y)+\sum^{m-1}_{k=1}\hat{\Gamma}_\phi(x_{2k})-\sum^{m}_{k=1}\hat{\Gamma}_\phi(x_{2k-1}),
\end{eqnarray*}
which implies that
\[\sum^{m}_{k=1}\hat{\Gamma}_\phi(x_{2k-1})-\sum^{m-1}_{k=1}\hat{\Gamma}_\phi(x_{2k})\leq x-C(x, y).\]
Hence, it follows from Remark~\ref{remk-variation} that
\[\mathbb{V}_{\phi^{-1}(y)}^{x}(\hat{\Gamma}_\phi)\leq 2(x-C(x, y))-\hat{\Gamma}_\phi(\phi^{-1}(y))-\hat{\Gamma}_\phi(x),\]
which is equivalent to the inequality as desired:
\[C(x, y)\leq x-\frac{1}{2}\Big(\mathbb{V}^{x}_{\phi^{-1}(y)}(\hat{\Gamma}_{\phi})+\hat{\Gamma}_{\phi}(x)+\hat{\Gamma}_{\phi}(\phi^{-1}(y)) \Big).\]
This completes the proof.
\end{proof}

We illustrate the pointwise supremum as follows.
\begin{exa}\label{exa-demon}
Let $\phi\in\Phi$ be defined by $\phi(t)=t^2$ and $\Gamma_\phi\in\Delta_\phi$ be defined by $\Gamma_\phi(t)=t^3$.
Then $C_1$ and $C_2$ are easily computed as follows:
\[
C_1(x, y)= \left \{
        \begin {array}{ll}
        \min\{x^3, y\}
                  &\quad \text{if\ \ $x, \sqrt{y}\leq\dfrac{\sqrt{3}}{3}$}  \\[2mm]
        \min\{x-\sqrt{y}+y\sqrt{y}, y\}
                  &\quad \text{if\ \ $x, \sqrt{y}\geq\dfrac{\sqrt{3}}{3}$}  \\[2mm]
        \min\{x-\dfrac{2\sqrt{3}}{9}, y\}
                  &\quad \text{if\ \ $\sqrt{y}\leq\dfrac{\sqrt{3}}{3}\leq x$}  \\[2mm]
        \min\{x, x^3-\sqrt{y}+y\sqrt{y}+\dfrac{2\sqrt{3}}{9}\}
                  &\quad \text{if\ \ $x\leq\dfrac{\sqrt{3}}{3}\leq \sqrt{y}$;}
        \end {array}
       \right.
\]
\[
C_2(x, y)= \left \{
        \begin {array}{ll}
        \min\{x, y\sqrt{y}\}
                  &\quad \text{if\ \ $x, \sqrt{y}\leq\dfrac{2}{3}$}  \\[2mm]
        \min\{x, -x^2+x^3+y\}
                  &\quad \text{if\ \ $x, \sqrt{y}\geq\dfrac{2}{3}$}  \\[2mm]
        \min\{-x^2+x^3+y\sqrt{y}+\dfrac{4}{27}, y\}
                  &\quad \text{if\ \ $\sqrt{y}\leq\dfrac{2}{3}\leq x$}  \\[2mm]
        \min\{x, y-\dfrac{4}{27}\}
                  &\quad \text{if\ \ $x\leq\dfrac{2}{3}\leq \sqrt{y}$.}
        \end {array}
       \right.
\]
The 3D plots of $C_1, C_2$ and $C_2\MySs_\phi C_1$ are visualized in Figure~\ref{fig-1}.
Note that $C_2\MySs_\phi C_1$ is not a copula as $V([\dfrac{1}{4}, \dfrac{1}{3}]\times[\dfrac{1}{16}, \dfrac{1}{9}])<0$.
\begin{figure}
\centering
\scalebox{0.2}[0.2]{
  \includegraphics{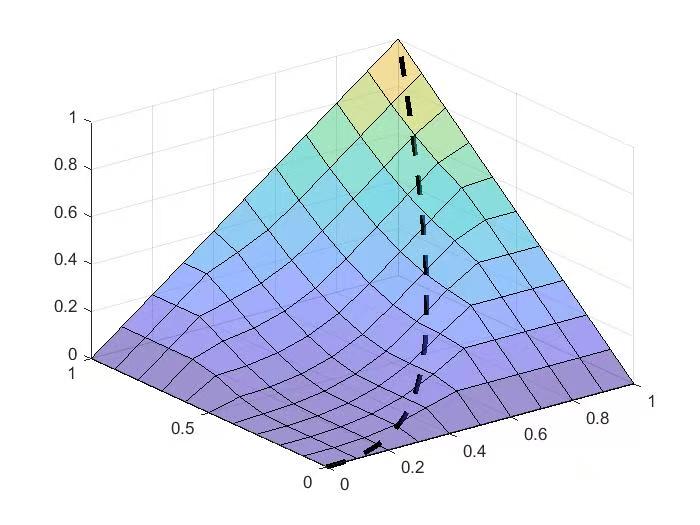}\includegraphics{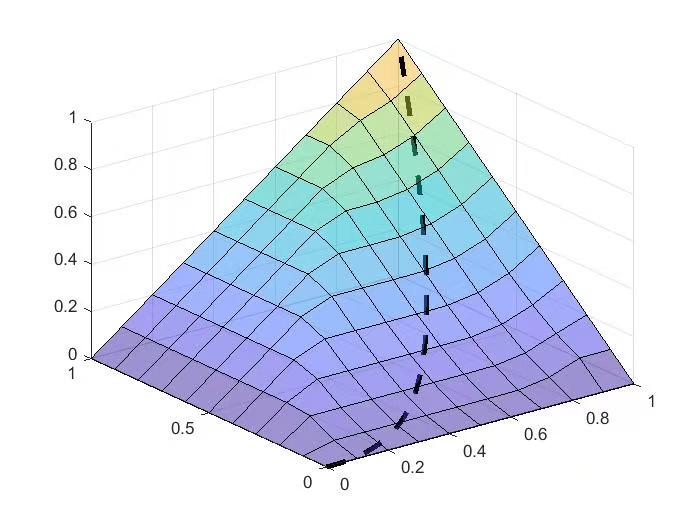}\includegraphics{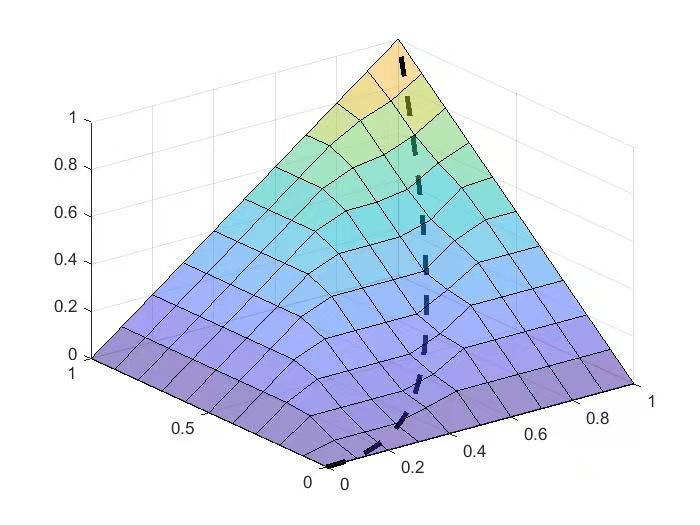}}
  \caption{3D plots of $C_1$ (left), $C_2$ (middle), and the $\phi$-splice of $C_1$ and $C_2$ (right) in Example~\ref{exa-demon}.}
  \label{fig-1}
\end{figure}
\end{exa}

\begin{remk}
For any quasi-copula $Q$ with $Q(x_0, y_0)=x_0=M(x_0, y_0)$ at some point $(x_0, y_0)\in[0, 1]^2$, it follows from (Q1) and (Q2)
that $Q(x, y)=x=M(x, y)$ for all $(x, y)\in[0, x_0]\times[y_0, 1]$. Similarly, if $Q(x_0, y_0)=y_0=M(x_0, y_0)$ at some point
$(x_0, y_0)\in[0, 1]^2$, then $Q(x, y)=y=M(x, y)$ for all $(x, y)\in[x_0, 1]\times[0, y_0]$.

According to the above observation, we can deduce some points at which the quasi-copula $C_2\MySs_\phi C_1$ behaves as $M$ from the set $A=\{t\in[0, 1]\mid \Gamma_\phi(t)=\min\{t, \phi(t)\}\}$. However, $C_2\MySs_\phi C_1$ may also behave as $M$ at other points. For instance, $A=\{0, 1\}$ in Example~\ref{exa-demon} can only imply by the above observation that $C_2\MySs_\phi C_1$ behaves as $M$ on the boundary of the unit square $[0, 1]^2$, but there are other points such as $(\dfrac{1}{3}, \dfrac{1}{81})$ at which
$C_2\MySs_\phi C_1$ behaves as $M$.

In conclusion, finding all the points at which $C_2\MySs_\phi C_1$ behaves as $M$ could be an interesting question for future work.
\end{remk}

\section{Characterization for the pointwise supremum of $\mathbb{C}_{\Gamma_\phi}$ to be a copula}\label{Sec-ChaCop}

In last section we have proved that the pointwise supremum of $\mathbb{C}_{\Gamma_\phi}$ is the $\phi$-splice $C_2\MySs_\phi C_1$ of the two copulas $C_1$ and $C_2$. In this section, we characterize when $C_2\MySs_\phi C_1$ is a copula.

Clearly, $C_2\MySs_\phi C_1$ can be reformulated by
\[
(C_2\MySs_\phi C_1)(x, y)= \left \{
        \begin {array}{ll}
        \min\bigg\{y, x-\dfrac{1}{2}\Big(\mathbb{V}^{x}_{\phi^{-1}(y)}(\hat{\Gamma}_{\phi})+\hat{\Gamma}_{\phi}(x)+\hat{\Gamma}_{\phi}(\phi^{-1}(y)) \Big)\bigg\}
                  &\quad \text{if\ \ $y\leq\phi(x)$}  \\[2mm]
         \min\bigg\{x, y-\dfrac{1}{2}\Big(\mathbb{V}^{\phi^{-1}(y)}_{x}(\tilde{\Gamma}_{\phi})+\tilde{\Gamma}_{\phi}(x)+ \tilde{\Gamma}_{\phi}(\phi^{-1}(y))\Big)\bigg\}
                  &\quad \text{if\ \ $y\geq\phi(x)$.}
        \end {array}
       \right.
\]
Since $C_1$ and $C_2$ are copulas, $C_2\MySs_\phi C_1$ is a copula if and only if $V(R)\geq 0$ for each rectangle $R=[x_1, x_2]\times [\phi(x_1), \phi(x_2)]$ (see~\cite{JDHD21}). For such a rectangle $R$, we have
\begin{eqnarray*}
V(R)&=& \Gamma_\phi(x_1)-\min\left\{x_1, \phi(x_2)-\frac{1}{2}\Big(\mathbb{V}_{x_1}^{x_2}(\tilde{\Gamma}_\phi)+\tilde{\Gamma}_\phi(x_1)+\tilde{\Gamma}_\phi(x_2)\Big)\right\}\\
&&-\min\left\{\phi(x_1), x_2-\frac{1}{2}\Big(\mathbb{V}_{x_1}^{x_2}(\hat{\Gamma}_\phi)+\hat{\Gamma}_\phi(x_1)+\hat{\Gamma}_\phi(x_2)\Big)\right\}+\Gamma_\phi(x_2)\\
&=& \max\left\{\Gamma_\phi(x_1)-x_1-\phi(x_1)+\Gamma_\phi(x_2), \frac{1}{2}\Big(\mathbb{V}_{x_1}^{x_2}(\hat{\Gamma}_\phi)-\hat{\Gamma}_\phi(x_1)-\hat{\Gamma}_\phi(x_2)\Big),\right.\\
&& \frac{1}{2}\Big(\mathbb{V}_{x_1}^{x_2}(\tilde{\Gamma}_\phi)-\tilde{\Gamma}_\phi(x_1)-\tilde{\Gamma}_\phi(x_2)\Big), \left.\frac{1}{2}\Big(\mathbb{V}_{x_1}^{x_2}(\hat{\Gamma}_\phi)+\mathbb{V}_{x_1}^{x_2}(\tilde{\Gamma}_\phi)-(x_2-x_1)-(\phi(x_2)-\phi(x_1))\Big)\right\}.
\end{eqnarray*}
Hence, $C_2\MySs_\phi C_1$ is a copula if and only if, for any
$(x_1, x_2)\in[0, 1]^2$ with $x_1<x_2$, at least one of the following four inequalities holds:
\begin{equation}\label{Condition-4}
\Gamma_\phi(x_1)+\Gamma_\phi(x_2)\geq x_1+\phi(x_1)
\end{equation}
\begin{equation}\label{Condition-1}
\mathbb{V}_{x_1}^{x_2}(\hat{\Gamma}_\phi)\geq\hat{\Gamma}_\phi(x_1)+\hat{\Gamma}_\phi(x_2)
\end{equation}
\begin{equation}\label{Condition-2}
\mathbb{V}_{x_1}^{x_2}(\tilde{\Gamma}_\phi)\geq\tilde{\Gamma}_\phi(x_1)+\tilde{\Gamma}_\phi(x_2)
\end{equation}
\begin{equation}\label{Condition-3}
\mathbb{V}_{x_1}^{x_2}(\hat{\Gamma}_\phi)+\mathbb{V}_{x_1}^{x_2}(\tilde{\Gamma}_\phi)\geq(x_2-x_1)+(\phi(x_2)-\phi(x_1)).
\end{equation}
Lemma~\ref{lemm-evofTV}(iii) ensures that the inequality~\eqref{Condition-3} is actually an equality.

If $C_2\MySs_\phi C_1$ is a copula, then inequality~\eqref{Condition-4} implies either~\eqref{Condition-1} or~\eqref{Condition-2}. Before proving this implication, we need the following result.

\begin{lemm}\label{lemm-condition}
For $(x_1, x_2)\in[0, 1]^2$ with $x_1<x_2$, the following results hold:
\begin{itemize}\setlength{\itemindent}{10pt}
\item[{\rm (i)}] If $\mathbb{V}_{x_1}^{x_2}(\hat{\Gamma}_\phi)\geq\hat{\Gamma}_\phi(x_1)+\hat{\Gamma}_\phi(x_2)$, then  $\mathbb{V}_{x_1}^{x}(\hat{\Gamma}_\phi)\geq\hat{\Gamma}_\phi(x_1)+\hat{\Gamma}_\phi(x)$ for any $x\in [x_2, 1]$;
\item[{\rm (ii)}] If $\mathbb{V}_{x_1}^{x_2}(\tilde{\Gamma}_\phi)\geq\tilde{\Gamma}_\phi(x_1)+\tilde{\Gamma}_\phi(x_2)$, then $\mathbb{V}_{x_1}^{x}(\tilde{\Gamma}_\phi)\geq\tilde{\Gamma}_\phi(x_1)+\tilde{\Gamma}_\phi(x)$ for any $x\in [x_2, 1]$.
\end{itemize}
\end{lemm}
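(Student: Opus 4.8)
The plan is to reduce both statements to the additivity of the total variation over adjacent subintervals, $\mathbb{V}_{x_1}^{x}(g)=\mathbb{V}_{x_1}^{x_2}(g)+\mathbb{V}_{x_2}^{x}(g)$, together with the trivial lower bound $\mathbb{V}_a^b(g)\geq g(b)-g(a)$ (valid for any real-valued $g$, since the two-point partition $\{a,b\}$ is admissible). No structural property of $\hat{\Gamma}_\phi$ or $\tilde{\Gamma}_\phi$ beyond finiteness of their total variation — guaranteed by Lemma~\ref{lemm-evofTV} — will be needed.

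For part (i), I would fix $x\in[x_2,1]$ and write $\mathbb{V}_{x_1}^{x}(\hat{\Gamma}_\phi)=\mathbb{V}_{x_1}^{x_2}(\hat{\Gamma}_\phi)+\mathbb{V}_{x_2}^{x}(\hat{\Gamma}_\phi)$. The hypothesis bounds the first summand below by $\hat{\Gamma}_\phi(x_1)+\hat{\Gamma}_\phi(x_2)$, and the trivial bound gives $\mathbb{V}_{x_2}^{x}(\hat{\Gamma}_\phi)\geq\hat{\Gamma}_\phi(x)-\hat{\Gamma}_\phi(x_2)$. Adding these two inequalities, the terms $\pm\hat{\Gamma}_\phi(x_2)$ cancel and one obtains exactly $\mathbb{V}_{x_1}^{x}(\hat{\Gamma}_\phi)\geq\hat{\Gamma}_\phi(x_1)+\hat{\Gamma}_\phi(x)$. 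When $x=x_2$, the convention $\mathbb{V}_{x_2}^{x_2}(\hat{\Gamma}_\phi)=0$ makes the argument degenerate harmlessly to the hypothesis itself.

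Part (ii) is the same computation verbatim with $\tilde{\Gamma}_\phi$ in place of $\hat{\Gamma}_\phi$, so it requires no separate work beyond a sentence pointing this out.

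There is essentially no obstacle: the lemma is a monotone-propagation statement that follows purely formally from interval-additivity of the total variation. The only points meriting a moment's care are the boundary case $x=x_2$ and the remark that all total variations in play are finite (by Lemma~\ref{lemm-evofTV}), which is what legitimizes the additivity identity.
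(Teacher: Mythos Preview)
Your proposal is correct and is essentially identical to the paper's proof: the paper decomposes $\mathbb{V}_{x_1}^{x}(\hat{\Gamma}_\phi)-\hat{\Gamma}_\phi(x_1)-\hat{\Gamma}_\phi(x)$ as $\big(\mathbb{V}_{x_1}^{x_2}(\hat{\Gamma}_\phi)-\hat{\Gamma}_\phi(x_1)-\hat{\Gamma}_\phi(x_2)\big)+\big(\mathbb{V}_{x_2}^{x}(\hat{\Gamma}_\phi)+\hat{\Gamma}_\phi(x_2)-\hat{\Gamma}_\phi(x)\big)$ and observes both summands are nonnegative, which is exactly your additivity-plus-trivial-bound argument rewritten. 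Part (ii) is likewise dismissed by analogy in the paper.
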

\begin{proof}
(i) For any $x\in [x_2, 1]$, we have
\begin{eqnarray*}
\mathbb{V}_{x_1}^{x}(\hat{\Gamma}_\phi)-\hat{\Gamma}_\phi(x_1)-\hat{\Gamma}_\phi(x)
&=&\Big(\mathbb{V}_{x_1}^{x_2}(\hat{\Gamma}_\phi)-\hat{\Gamma}_\phi(x_1)-\hat{\Gamma}_\phi(x_2)\Big)\\
&&+\Big(\mathbb{V}_{x_2}^{x}(\hat{\Gamma}_\phi)-\hat{\Gamma}_\phi(x)+\hat{\Gamma}_\phi(x_2)\Big).
\end{eqnarray*}
It is clear that $\mathbb{V}_{x_2}^{x}(\hat{\Gamma}_\phi)-\hat{\Gamma}_\phi(x)+\hat{\Gamma}_\phi(x_2)\geq 0$, together with the assumption implying that
$\mathbb{V}_{x_1}^{x}(\hat{\Gamma}_\phi)-\hat{\Gamma}_\phi(x_1)-\hat{\Gamma}_\phi(x)\geq 0$.

(ii) The proof is similar to that of (i).
\end{proof}

\begin{prot}
Let $C_2\MySs_\phi C_1$ be a copula and the inequality~\eqref{Condition-4} hold for some $(x_1, x_2)\in[0, 1]^2$ with $x_1<x_2$.
Then either~\eqref{Condition-1} or~\eqref{Condition-2} holds.
\end{prot}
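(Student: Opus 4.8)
The plan is to argue by contradiction. Suppose $C_2\MySs_\phi C_1$ is a copula, that \eqref{Condition-4} holds at a fixed pair $(x_1,x_2)$ with $x_1<x_2$, but that neither \eqref{Condition-1} nor \eqref{Condition-2} holds there; I will then produce a pair at which \emph{all four} of \eqref{Condition-4}--\eqref{Condition-3} fail, contradicting the characterization established above. First I would record two easy facts. Since $\hat\Gamma_\phi\ge 0$, $\tilde\Gamma_\phi\ge 0$, and $\mathbb{V}_{x_1}^{x_2}(\hat\Gamma_\phi)\ge|\hat\Gamma_\phi(x_2)-\hat\Gamma_\phi(x_1)|$, the failure of \eqref{Condition-1} forces $\hat\Gamma_\phi(x_1)>0$ and the failure of \eqref{Condition-2} forces $\tilde\Gamma_\phi(x_1)>0$; hence $\Gamma_\phi(x_1)<\min\{x_1,\phi(x_1)\}$, so $2\Gamma_\phi(x_1)<x_1+\phi(x_1)$. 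Also, the defining inequalities of $\Delta_\phi$ together with the continuity of $\phi$ make $\Gamma_\phi$ (hence $\hat\Gamma_\phi$ and $\tilde\Gamma_\phi$) continuous, so the maps $x\mapsto\mathbb{V}_{x_1}^{x}(\hat\Gamma_\phi)$ and $x\mapsto\mathbb{V}_{x_1}^{x}(\tilde\Gamma_\phi)$ are continuous on $[x_1,x_2]$.

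The crucial step is to locate the point at which \eqref{Condition-4} first becomes tight. Put $c=x_1+\phi(x_1)-\Gamma_\phi(x_1)$. Then $\Gamma_\phi(x_1)<c$ by the previous paragraph, while $\Gamma_\phi(x_2)\ge c$ is exactly \eqref{Condition-4} at $(x_1,x_2)$; since $\Gamma_\phi$ is continuous and increasing, $x^{\ast}:=\min\{x\in[x_1,x_2]:\Gamma_\phi(x)\ge c\}$ satisfies $x_1<x^{\ast}\le x_2$ and $\Gamma_\phi(x^{\ast})=c$, i.e.\ \eqref{Condition-4} holds with equality at $(x_1,x^{\ast})$, whereas \eqref{Condition-4} fails at $(x_1,x)$ for every $x\in(x_1,x^{\ast})$. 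Now fix such an $x$. Condition \eqref{Condition-4} fails at $(x_1,x)$; moreover \eqref{Condition-1} cannot hold at $(x_1,x)$, since otherwise Lemma~\ref{lemm-condition}(i) (note $x_2\ge x^{\ast}>x$) would give \eqref{Condition-1} at $(x_1,x_2)$, contrary to assumption, and likewise \eqref{Condition-2} cannot hold at $(x_1,x)$ by Lemma~\ref{lemm-condition}(ii). As $C_2\MySs_\phi C_1$ is a copula, the characterization above then forces \eqref{Condition-3} at $(x_1,x)$, which by Lemma~\ref{lemm-evofTV}(iii) is the equality $\mathbb{V}_{x_1}^{x}(\hat\Gamma_\phi)+\mathbb{V}_{x_1}^{x}(\tilde\Gamma_\phi)=(x-x_1)+(\phi(x)-\phi(x_1))$. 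Letting $x\to(x^{\ast})^{-}$ and using the continuity of the two variation functions, the same equality holds at $x=x^{\ast}$.

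To conclude I would invoke the elementary identity
\begin{align*}
&\mathbb{V}_{x_1}^{x^{\ast}}(\hat\Gamma_\phi)+\mathbb{V}_{x_1}^{x^{\ast}}(\tilde\Gamma_\phi)-(x^{\ast}-x_1)-(\phi(x^{\ast})-\phi(x_1))\\
&\quad=\Big(\mathbb{V}_{x_1}^{x^{\ast}}(\hat\Gamma_\phi)-\hat\Gamma_\phi(x_1)-\hat\Gamma_\phi(x^{\ast})\Big)+\Big(\mathbb{V}_{x_1}^{x^{\ast}}(\tilde\Gamma_\phi)-\tilde\Gamma_\phi(x_1)-\tilde\Gamma_\phi(x^{\ast})\Big)-2\Big(\Gamma_\phi(x_1)+\Gamma_\phi(x^{\ast})-x_1-\phi(x_1)\Big),
\end{align*}
which follows by a direct computation from $\hat\Gamma_\phi+\tilde\Gamma_\phi={\rm id}+\phi-2\Gamma_\phi$. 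Its left-hand side vanishes by the previous paragraph, and the last term on the right vanishes because \eqref{Condition-4} holds with equality at $(x_1,x^{\ast})$; hence the two remaining brackets sum to $0$, so at least one of them is nonnegative, i.e.\ \eqref{Condition-1} or \eqref{Condition-2} holds at $(x_1,x^{\ast})$. Since $x^{\ast}\le x_2$, Lemma~\ref{lemm-condition} then propagates that very condition to $(x_1,x_2)$, contradicting our assumption. The only genuinely nonroutine ingredient is the choice of $x^{\ast}$: it is exactly the place where the (by itself rather weak) inequality \eqref{Condition-4} degenerates into an equality, and there copula-ness forces the equality form of \eqref{Condition-3}, which combined with the displayed identity yields \eqref{Condition-1} or \eqref{Condition-2}; the remainder is bookkeeping with Lemma~\ref{lemm-condition} and the continuity of the variation functions.
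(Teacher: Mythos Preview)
Your proof is correct and follows essentially the same approach as the paper: both arguments single out the same critical point (your $x^\ast$ is the paper's $t^\ast$), invoke Lemma~\ref{lemm-condition} in the contrapositive to rule out \eqref{Condition-1} and \eqref{Condition-2} below $x_2$, and exploit the same algebraic identity linking the four conditions. The only organizational difference is the direction of the continuity step: the paper first shows \eqref{Condition-3} fails strictly at $t^\ast$ and then perturbs to some $t_0<t^\ast$ where all four conditions fail, whereas you show \eqref{Condition-3} holds with equality on $(x_1,x^\ast)$ and pass to the limit; these are two sides of the same coin. One small remark: your final propagation via Lemma~\ref{lemm-condition} is unnecessary, since that same lemma (applied with $x^\ast\le x_2$) already gives that both brackets in the identity are strictly negative at $x^\ast$, directly contradicting their sum being zero.
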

\begin{proof}
Suppose that neither~\eqref{Condition-1} nor~\eqref{Condition-2} holds at $(x_1, x_2)$, which implies that $\hat{\Gamma}_\phi(x_1)>0$ and $\tilde{\Gamma}_\phi(x_1)>0$, and thus $\Gamma_\phi(x_1)<\min\{x_1, \phi(x_1)\}$. Stipulate
\[
t^*=\sup\{t\mid t\in [x_1, x_2]\mbox{~and~}\Gamma_\phi(t)+\Gamma_\phi(x_1)-x_1-\phi(x_1)<0\}.
\]
Then $t^*\in\,]x_1, x_2]$ and $\Gamma_\phi(t^*)+\Gamma_\phi(x_1)-x_1-\phi(x_1)=0$. It follows from Lemma~\ref{lemm-condition} that

\centerline{$\mathbb{V}_{x_1}^{t^*}(\hat{\Gamma}_\phi)-\hat{\Gamma}_\phi(x_1)-\hat{\Gamma}_\phi(t^*)< 0$ and $\mathbb{V}_{x_1}^{t^*}(\tilde{\Gamma}_\phi)-\tilde{\Gamma}_\phi(x_1)-\tilde{\Gamma}_\phi(t^*)< 0$.}

\noindent Hence, we have
\begin{eqnarray*}
&&\mathbb{V}_{x_1}^{t^*}(\hat{\Gamma}_\phi)+\mathbb{V}_{x_1}^{t^*}(\tilde{\Gamma}_\phi)-(t^*-x_1)-(\phi(t^*)-\phi(x_1))\\
&=&\Big(\mathbb{V}_{x_1}^{t^*}(\hat{\Gamma}_\phi)-\hat{\Gamma}_\phi(x_1)-\hat{\Gamma}_\phi(t^*)\Big)
+\Big(\mathbb{V}_{x_1}^{t^*}(\tilde{\Gamma}_\phi)-\tilde{\Gamma}_\phi(x_1)-\tilde{\Gamma}_\phi(t^*)\Big)\\
&& -2\Big(\Gamma_\phi(t^*)+\Gamma_\phi(x_1)-x_1-\phi(x_1)\Big)\\
&<& 0.
\end{eqnarray*}
It is easy to see that there exists $t_0\in\,]x_1, t^*[\,$ such that
\[\mathbb{V}_{x_1}^{t_0}(\hat{\Gamma}_\phi)+\mathbb{V}_{x_1}^{t_0}(\tilde{\Gamma}_\phi)-(t_0-x_1)-(\phi(t_0)-\phi(x_1))<0.\]
Therefore, neither of the four inequalities~\eqref{Condition-4}--\eqref{Condition-3} holds at $(x_1, t_0)$, a contradiction with
the assumption that $C_2\MySs_\phi C_1$ is a copula.
\end{proof}

Due to the above proposition, we obtain a simplified characterization when $C_2\MySs_\phi C_1$ is a copula.
\begin{thrm}\label{thrm-CbeCopu}
$C_2\MySs_\phi C_1$ is a copula if and only if, for any $(x_1, x_2)\in[0, 1]^2$ with $x_1<x_2$, at least one of the three inequalities \eqref{Condition-1}--\eqref{Condition-3} holds.
\end{thrm}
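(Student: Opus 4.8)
The plan is to deduce this statement immediately from the four-inequality characterization recalled just above Theorem~\ref{thrm-CbeCopu} together with the preceding Proposition; no new computation is needed.

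\textbf{The ``if'' direction.} Suppose that for every $(x_1, x_2)\in[0, 1]^2$ with $x_1<x_2$ at least one of \eqref{Condition-1}--\eqref{Condition-3} holds. Then, trivially, for every such pair at least one of the four inequalities \eqref{Condition-4}--\eqref{Condition-3} holds, and hence by the characterization established above (which rests on the fact that $C_1, C_2$ are copulas and on~\cite{JDHD21}) the $\phi$-splice $C_2\MySs_\phi C_1$ is a copula.

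\textbf{The ``only if'' direction.} Suppose $C_2\MySs_\phi C_1$ is a copula and fix $(x_1, x_2)\in[0, 1]^2$ with $x_1<x_2$. By the same characterization, at least one of \eqref{Condition-4}--\eqref{Condition-3} holds at $(x_1, x_2)$. If one of \eqref{Condition-1}, \eqref{Condition-2}, \eqref{Condition-3} holds, there is nothing to prove. Otherwise \eqref{Condition-4} must hold; since $C_2\MySs_\phi C_1$ is a copula, the preceding Proposition applies and gives that \eqref{Condition-1} or \eqref{Condition-2} holds at $(x_1, x_2)$. In either case one of \eqref{Condition-1}--\eqref{Condition-3} holds, which is what we wanted.

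The substantive content — namely that, in the presence of the copula hypothesis, inequality~\eqref{Condition-4} cannot be the \emph{only} one of the four that holds at a given pair — has already been proved in the preceding Proposition (its proof being the delicate part, via the stopping-time-type point $t^*$ and Lemma~\ref{lemm-condition}). Consequently there is no remaining obstacle, and the argument above is the whole proof.
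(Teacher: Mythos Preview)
Your proof is correct and matches the paper's approach exactly: the paper does not even write out a separate proof for this theorem, simply stating that ``Due to the above proposition, we obtain a simplified characterization,'' which is precisely the two-line deduction you spell out. There is nothing to add.
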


As an application of the characterization, we illustrate when $C_2\MySs_\phi C_1$ is a copula.

\begin{exa}\label{Exa-illustrative}
Let $\phi\in\Phi$ and $\Gamma_\phi\in\Delta_\phi$ be determined by a family of pairwisely disjoint open subintervals
$\{\,]a_i, b_i[\,\}_{i\in I}$ of $[0, 1]$ (see~\cite{OSZ25}), \emph{i.e.},
\[
\Gamma_\phi(t) =\left \{
        \begin {array}{ll}
         \min\{a_i, \phi(a_i)\}
                  &\quad \text{\rm if\ \ $t\in \,]a_i, u_i^*]$} \\[2mm]
           \phi(t)+t-\max\{b_i, \phi(b_i)\}
                  &\quad \text{\rm if\ \ $t\in [u_i^*, b_i[\,$} \\[2mm]
         \min\{t, \phi(t)\}
                  &\quad {\rm otherwise,}
        \end {array}
       \right.
\]
where $u_i^*\in\,]a_i, b_i[\,$ is the unique solution to the equation: $\phi(t)+t=\min\{a_i, \phi(a_i)\}+\max\{b_i, \phi(b_i)\}$. For any $(x_1, x_2)\in[0, 1]^2$ with $x_1<x_2$, we can show that at least one of the three inequalities \eqref{Condition-1}--\eqref{Condition-3} holds by considering two cases:
\begin{itemize}
\item[\rm (i)] If there exists $t\in [x_1, x_2]$ such that $\Gamma_\phi(t)=t$, \emph{i.e.}, $\hat{\Gamma}_\phi(t)=0$, then
\[
\mathbb{V}_{x_1}^{x_2}(\hat{\Gamma}_\phi)-\hat{\Gamma}_\phi(x_1)-\hat{\Gamma}_\phi(x_2)\geq -2\hat{\Gamma}_\phi(t)=0.
\]
That is to say, \eqref{Condition-1} holds. Similarly, if $\Gamma_\phi(t)=\phi(t)$ for some $t\in [x_1, x_2]$, then \eqref{Condition-2} holds.

\item[\rm (ii)] Suppose that $[x_1, x_2]\subset \,]a_i, b_i[\,$ for some $i\in I$. If $x_1, x_2\leq u_i^*$, then both $\hat{\Gamma}_\phi(t)$ and $\tilde{\Gamma}_\phi(t)$ are increasing on $[x_1, x_2]$, which implies that
\[
\mathbb{V}_{x_1}^{x_2}(\hat{\Gamma}_\phi)+\mathbb{V}_{x_1}^{x_2}(\tilde{\Gamma}_\phi)=(x_2-x_1)+(\phi(x_2)-\phi(x_1)),
\]
\emph{i.e.}, \eqref{Condition-3} holds. Similarly, if $x_1, x_2\geq u_i^*$, then \eqref{Condition-3} also holds.
For the last subcase that $x_1<u_i^*<x_2$, we have
\begin{eqnarray*}
\mathbb{V}_{x_1}^{x_2}(\hat{\Gamma}_\phi)+\mathbb{V}_{x_1}^{x_2}(\tilde{\Gamma}_\phi)
&=&\Big(\mathbb{V}_{x_1}^{u_i^*}(\hat{\Gamma}_\phi)+\mathbb{V}_{x_1}^{u_i^*}(\hat{\Gamma}_\phi)\Big)
+\Big(\mathbb{V}_{u_i^*}^{x_2}(\tilde{\Gamma}_\phi)+\mathbb{V}_{u_i^*}^{x_2}(\tilde{\Gamma}_\phi)\Big)\\
&=&\Big(u_i^*-x_1+\phi(u_i^*)-\phi(x_1)\Big)+\Big(x_2-u_i^*+\phi(x_2)-\phi(u_i^*)\Big)\\
&=&(x_2-x_1)+(\phi(x_2)-\phi(x_1)),
\end{eqnarray*}
\emph{i.e.}, \eqref{Condition-3} also holds.
\end{itemize}
\end{exa}

There exist $\phi\in\Phi$ and $\Gamma_\phi\in\Delta_\phi$ that is not of the form as in Example~\ref{Exa-illustrative} such that $C_2\MySs_\phi C_1$ is a copula.
\begin{exa}\label{Exa-itisacopula}
Let $\phi\in\Phi$ be defined by $\phi(t)=t^2$ and $\Gamma_\phi\in\Delta_\phi$ be defined by
\[
\Gamma_\phi(t) =\left \{
        \begin {array}{ll}
         0
                  &\quad \text{\rm if\ \ $t\in [0, \dfrac{1}{3}]$} \\[2mm]
           t+t^2-\dfrac{4}{9}
                  &\quad \text{\rm if\ \ $t\in\,]\dfrac{1}{3}, \dfrac{2}{5}]\,$} \\[2mm]
           \dfrac{26}{225}
                  &\quad \text{\rm if\ \ $t\in\,]\dfrac{2}{5}, t_0]\,$} \\[2mm]
           t+t^2-1
                  &\quad \text{\rm if\ \ $t\in\,]t_0, 1]\,$},
        \end {array}
       \right.
\]
where $\dfrac{2}{5}<t_0<1$ and $t_0+t_0^2-1=\dfrac{26}{225}$. It is an easy exercise to verify that, for any $(x_1, x_2)\in[0, 1]^2$ with $x_1<x_2$, it holds that $\mathbb{V}_{x_1}^{x_2}(\hat{\Gamma}_\phi)+\mathbb{V}_{x_1}^{x_2}(\tilde{\Gamma}_\phi)=(x_2-x_1)+(x^2_2-x^2_1)$, \emph{i.e.}, \eqref{Condition-3} always holds. Hence, $C_2\MySs_\phi C_1$ is a copula that is visualized in Figure~\ref{fig-2}.
\begin{figure}
\centering
\scalebox{0.5}[0.5]{
  \includegraphics{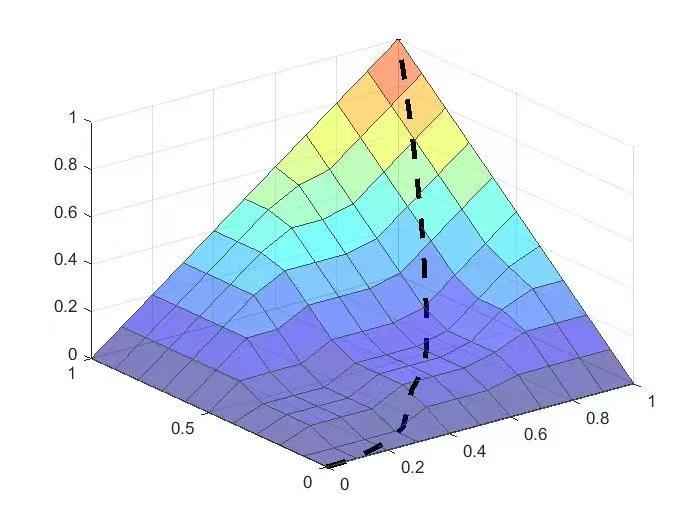}}
  \caption{3D plot of the copula in Example~\ref{Exa-itisacopula}.}
  \label{fig-2}
\end{figure}
\end{exa}

Note that $\Gamma^\prime_\phi(t)\in\{0, 1+\phi^\prime(t)\}$ almost everywhere on the set $\{t\in[0, 1]\mid \Gamma_\phi(t)<\min\{t, \phi(t)\}\}$ in the above two examples. Actually, we have the following result.
\begin{prot}\label{prot-simcri}
Let $\Gamma^\prime_\phi(t)\in\{0, 1+\phi^\prime(t)\}$ almost everywhere on the set $\{t\in[0, 1]\mid \Gamma_\phi(t)<\min\{t, \phi(t)\}\}$. Then $C_2\MySs_\phi C_1$ is a copula.
\end{prot}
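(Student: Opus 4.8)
The plan is to reduce to the criterion of Theorem~\ref{thrm-CbeCopu}: it suffices to verify that for every pair $(x_1,x_2)\in[0,1]^2$ with $x_1<x_2$ at least one of the inequalities \eqref{Condition-1}--\eqref{Condition-3} holds. I would distinguish cases according to whether $\hat{\Gamma}_\phi$ or $\tilde{\Gamma}_\phi$ has a zero on $[x_1,x_2]$.

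\emph{Zero present.} If $\hat{\Gamma}_\phi(s)=0$ for some $s\in[x_1,x_2]$, then, since the total variation dominates the sum of absolute increments along the partition $x_1\le s\le x_2$,
\[
\mathbb{V}_{x_1}^{x_2}(\hat{\Gamma}_\phi)\ \geq\ |\hat{\Gamma}_\phi(x_1)-\hat{\Gamma}_\phi(s)|+|\hat{\Gamma}_\phi(s)-\hat{\Gamma}_\phi(x_2)|\ =\ \hat{\Gamma}_\phi(x_1)+\hat{\Gamma}_\phi(x_2),
\]
i.e.\ \eqref{Condition-1} holds; this is the device already used in Example~\ref{Exa-illustrative}. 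Symmetrically, if $\tilde{\Gamma}_\phi(s)=0$ for some $s\in[x_1,x_2]$, then \eqref{Condition-2} holds. (In particular this settles $x_1=0$ and $x_2=1$, since $\hat{\Gamma}_\phi(0)=\hat{\Gamma}_\phi(1)=0$.)

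\emph{No zero.} Otherwise $\hat{\Gamma}_\phi>0$ and $\tilde{\Gamma}_\phi>0$ throughout $[x_1,x_2]$ (both are nonnegative and continuous), that is, $\Gamma_\phi(t)<\min\{t,\phi(t)\}$ for every $t\in[x_1,x_2]$, so the hypothesis is in force on the whole of $[x_1,x_2]$. Then for almost every $t\in[x_1,x_2]$ we have $\hat{\Gamma}'_\phi(t)=1-\Gamma'_\phi(t)\in\{1,-\phi'(t)\}$ and $\tilde{\Gamma}'_\phi(t)=\phi'(t)-\Gamma'_\phi(t)\in\{\phi'(t),-1\}$, whence $|\hat{\Gamma}'_\phi(t)|+|\tilde{\Gamma}'_\phi(t)|=1+\phi'(t)$ almost everywhere on $[x_1,x_2]$. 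I would then integrate this pointwise identity into
\[
\mathbb{V}_{x_1}^{x_2}(\hat{\Gamma}_\phi)+\mathbb{V}_{x_1}^{x_2}(\tilde{\Gamma}_\phi)=(x_2-x_1)+(\phi(x_2)-\phi(x_1)),
\]
which is \eqref{Condition-3} (indeed an equality, as Lemma~\ref{lemm-evofTV}(iii) predicts), and conclude by Theorem~\ref{thrm-CbeCopu}.

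\emph{The main obstacle} is exactly this last integration, since in general only $\mathbb{V}_a^b(f)\geq\int_a^b|f'|$ holds, with equality requiring absolute continuity. When $\phi$ is absolutely continuous (as for the $C^1$ maps appearing in the examples) this is painless: property~(ii) in the definition of $\Delta_\phi$ makes the increments of $\Gamma_\phi$ dominated by those of the absolutely continuous increasing map $t\mapsto t+\phi(t)$, so $\Gamma_\phi$, and hence $\hat{\Gamma}_\phi$ and $\tilde{\Gamma}_\phi$, are absolutely continuous; the two total variations then equal $\int_{x_1}^{x_2}|\hat{\Gamma}'_\phi|$ and $\int_{x_1}^{x_2}|\tilde{\Gamma}'_\phi|$, which sum to $\int_{x_1}^{x_2}(1+\phi')=(x_2-x_1)+(\phi(x_2)-\phi(x_1))$. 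For a general continuous strictly increasing $\phi$, I would use the Lebesgue decompositions $\phi=\phi_{\mathrm{ac}}+\phi_{\mathrm{s}}$ and $\Gamma_\phi=(\Gamma_\phi)_{\mathrm{ac}}+(\Gamma_\phi)_{\mathrm{s}}$ into absolutely continuous and singular parts: property~(ii) yields $0\le d(\Gamma_\phi)_{\mathrm{s}}\le d\phi_{\mathrm{s}}$, so $\hat{\Gamma}_\phi$ and $\tilde{\Gamma}_\phi$ have monotone singular parts $-(\Gamma_\phi)_{\mathrm{s}}$ and $\phi_{\mathrm{s}}-(\Gamma_\phi)_{\mathrm{s}}$ whose variations over $[x_1,x_2]$ add up to $\phi_{\mathrm{s}}(x_2)-\phi_{\mathrm{s}}(x_1)$, while the absolutely continuous parts contribute $(x_2-x_1)+(\phi_{\mathrm{ac}}(x_2)-\phi_{\mathrm{ac}}(x_1))$ by the pointwise identity above; since singular and absolutely continuous Lebesgue--Stieltjes measures are mutually singular, these two contributions simply add, and the sum is again $(x_2-x_1)+(\phi(x_2)-\phi(x_1))$, which establishes \eqref{Condition-3} in full generality.
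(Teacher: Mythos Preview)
Your argument follows the paper's exactly: the same two-case split (a zero of $\hat{\Gamma}_\phi$ or $\tilde{\Gamma}_\phi$ on $[x_1,x_2]$ yields \eqref{Condition-1} or \eqref{Condition-2} via the partition $x_1\le s\le x_2$; otherwise the derivative hypothesis on the whole of $[x_1,x_2]$ gives \eqref{Condition-3}), and the same pointwise identity $|\hat{\Gamma}_\phi'|+|\tilde{\Gamma}_\phi'|=1+\phi'$ almost everywhere. The only difference is the final integration step: the paper writes $\mathbb{V}_{x_1}^{x_2}(\hat{\Gamma}_\phi)+\mathbb{V}_{x_1}^{x_2}(\tilde{\Gamma}_\phi)\ge\int_{x_1}^{x_2}(1+\phi')=(x_2-x_1)+(\phi(x_2)-\phi(x_1))$, tacitly using $\int_{x_1}^{x_2}\phi'=\phi(x_2)-\phi(x_1)$, which needs $\phi$ absolutely continuous. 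You correctly flag this and supply the Lebesgue decomposition argument to handle an arbitrary $\phi\in\Phi$: the singular parts of $\hat{\Gamma}_\phi$ and $\tilde{\Gamma}_\phi$ are monotone with variations summing to $\phi_{\mathrm s}(x_2)-\phi_{\mathrm s}(x_1)$, the absolutely continuous parts contribute $(x_2-x_1)+(\phi_{\mathrm{ac}}(x_2)-\phi_{\mathrm{ac}}(x_1))$ by the pointwise identity, and mutual singularity makes the total variations additive. This is correct and slightly more complete than the paper's own proof.
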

\begin{proof}
Let $[x_1, x_2]\subseteq[0, 1]$ be arbitrarily given. If there exists $t\in [x_1, x_2]$ such that $\Gamma_\phi(t)=\min\{t, \phi(t)\}$, then either~\eqref{Condition-1} or~\eqref{Condition-2} holds. If $[x_1, x_2]\subseteq\{t\in[0, 1]\mid \Gamma_\phi(t)<\min\{t, \phi(t)\}\}$, then
\[
\mathbb{V}_{x_1}^{x_2}(\hat{\Gamma}_\phi)\geq\int_{x_1}^{x_2}|\hat{\Gamma}_\phi^\prime(t)|{\rm d}t=\int_{[x_1, x_2]\cap\{t\mid \Gamma^\prime_\phi(t)=0\}}{\rm d}t+ \int_{[x_1, x_2]\cap\{t\mid \Gamma^\prime_\phi(t)= 1+\phi^\prime(t)\}} \phi^\prime(t){\rm d}t
\]
and
\[
\mathbb{V}_{x_1}^{x_2}(\tilde{\Gamma}_\phi)\geq\int_{x_1}^{x_2}|\tilde{\Gamma}_\phi^\prime(t)|{\rm d}t=\int_{[x_1, x_2]\cap\{t\mid \Gamma^\prime_\phi(t)=1+\phi^\prime(t)\}}{\rm d}t+ \int_{[x_1, x_2]\cap\{t\mid \Gamma^\prime_\phi(t)=0\}} \phi^\prime(t){\rm d}t.
\]
Hence, we have
\[
\mathbb{V}_{x_1}^{x_2}(\hat{\Gamma}_\phi)+\mathbb{V}_{x_1}^{x_2}(\tilde{\Gamma}_\phi)\geq\int_{x_1}^{x_2}{\rm d}t+\int_{x_1}^{x_2}\phi^\prime(t){\rm d}t=(x_2-x_1)+(\phi(x_2)-\phi(x_1)),
\]
\emph{i.e.}, ~\eqref{Condition-3} holds. Therefore, at least one of the three inequalities~\eqref{Condition-1}--\eqref{Condition-3} holds, which implies that $C_2\MySs_\phi C_1$ is a copula.
\end{proof}

Whether the converse of Proposition~\ref{prot-simcri} holds is an interesting question for future work.

Now we consider the case that $\phi=\rm{id}$. In this case, both \eqref{Condition-1} and \eqref{Condition-2} reduce to
\begin{equation}\label{Condition1-delta}
\mathbb{V}_{x_1}^{x_2}(\hat{\delta})\geq \hat{\delta}(x_1)+\hat{\delta}(x_2)
\end{equation}
and (\ref{Condition-3}) reduces to
\begin{equation}\label{Condition2-delta}
\mathbb{V}_{x_1}^{x_2}(\hat{\delta})= x_2-x_1.
\end{equation}
If $\hat{\delta}(x_1)=0$, \emph{i.e.}, $\delta(x_1)=x_1$, then \eqref{Condition1-delta} holds. If $\delta(x_1)<x_1$, then there exists $x_2>x_1$ such that $x_2-x_1<\hat{\delta}(x_1)$. As $\hat{\delta}$ is $1$-Lipschitz, we have $\mathbb{V}_{x_1}^{x_2}(\hat{\delta})\leq x_2-x_1<\hat{\delta}(x_1)+\hat{\delta}(x_2)$, \emph{i.e.}, \eqref{Condition1-delta} fails. To ensure $\bar{C}_\delta$ to be a copula, \eqref{Condition2-delta} must hold, while \eqref{Condition2-delta} holds if and only if $|\hat{\delta}^\prime|=1$ almost everywhere on $[x_1, x_2]$. Hence, Theorem~\ref{thrm-CbeCopu} degenerates into Theorem 3.8 of~\cite{KMS24} if $\phi=\rm{id}$.
\begin{corl}
Let $\delta$ be a diagonal function. Then $\bar{C}_\delta$ is a copula if and only if $\delta^\prime(t)\in\{0, 2\}$ almost everywhere on the set $\{x\in[0, 1]\mid \delta(x)<x\}$.
\end{corl}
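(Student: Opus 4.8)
The plan is to obtain the corollary as the $\phi=\mathrm{id}$ instance of Theorem~\ref{thrm-CbeCopu}. Specializing $\phi=\mathrm{id}$ gives $\Gamma_\phi=\delta$, $\mathbb{C}_{\Gamma_\phi}=\mathbb{C}_\delta$, $\phi^{-1}=\mathrm{id}$, and $\hat{\Gamma}_\phi=\tilde{\Gamma}_\phi=\hat{\delta}$; comparing the reformulation of the $\phi$-splice in Section~\ref{Sec-ChaCop} with the formula for $\bar{C}_\delta$ recalled in Section~\ref{Sec-Preliminaries} termwise, one sees $C_2\MySs_{\mathrm{id}}C_1=\bar{C}_\delta$. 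Under this specialization the inequalities \eqref{Condition-1} and \eqref{Condition-2} both become \eqref{Condition1-delta} and \eqref{Condition-3} becomes \eqref{Condition2-delta}, so by Theorem~\ref{thrm-CbeCopu}, $\bar{C}_\delta$ is a copula if and only if, for every pair $(x_1,x_2)$ with $x_1<x_2$, at least one of \eqref{Condition1-delta}, \eqref{Condition2-delta} holds. It then remains to translate this into the pointwise condition on $\delta'$.

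For the ``if'' direction I would simply invoke Proposition~\ref{prot-simcri} with $\phi=\mathrm{id}$: since $\phi'\equiv 1$, the hypothesis $\delta'(t)\in\{0,2\}$ a.e.\ on $\{x:\delta(x)<x\}$ is exactly the hypothesis $\Gamma'_\phi(t)\in\{0,1+\phi'(t)\}$ a.e.\ on $\{t:\Gamma_\phi(t)<\min\{t,\phi(t)\}\}$ of that proposition, so $\bar{C}_\delta=C_2\MySs_{\mathrm{id}}C_1$ is a copula. (If a self-contained argument is preferred: given $x_1<x_2$, either $\hat{\delta}$ vanishes at some $t\in[x_1,x_2]$, in which case inserting $t$ into a partition gives $\mathbb{V}_{x_1}^{x_2}(\hat{\delta})\ge\hat{\delta}(x_1)+\hat{\delta}(x_2)$, i.e.\ \eqref{Condition1-delta}; or $\delta(s)<s$ for all $s\in[x_1,x_2]$, and then $\hat{\delta}$ is absolutely continuous with $|\hat{\delta}'|=|1-\delta'|=1$ a.e.\ on $[x_1,x_2]$, whence $\mathbb{V}_{x_1}^{x_2}(\hat{\delta})=\int_{x_1}^{x_2}|\hat{\delta}'|=x_2-x_1$, i.e.\ \eqref{Condition2-delta}.)

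For the ``only if'' direction, assume $\bar{C}_\delta$ is a copula and set $E=\{x\in[0,1]:\delta(x)<x\}$. Since $\delta$ is Lipschitz by (D2), hence continuous, $E$ is open and $\hat{\delta}$ is continuous and strictly positive on $E$. If $x_1\in E$, then $\hat{\delta}(x_1)>0$, and for every $x_2\in(x_1,x_1+\hat{\delta}(x_1))$ the $1$-Lipschitz property of $\hat{\delta}$ gives $\mathbb{V}_{x_1}^{x_2}(\hat{\delta})\le x_2-x_1<\hat{\delta}(x_1)\le\hat{\delta}(x_1)+\hat{\delta}(x_2)$, so \eqref{Condition1-delta} fails and Theorem~\ref{thrm-CbeCopu} forces \eqref{Condition2-delta}, i.e.\ $\mathbb{V}_{x_1}^{x_2}(\hat{\delta})=x_2-x_1$. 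Fixing a compact $[c,d]\subseteq E$, one has $\hat{\delta}\ge m>0$ there for some $m$, so $\mathbb{V}_{x_1}^{x_2}(\hat{\delta})=x_2-x_1$ whenever $c\le x_1<x_2\le d$ with $x_2-x_1<m$; chaining through a partition of mesh $<m$ and using additivity of total variation extends this to all $c\le x_1<x_2\le d$. As $\hat{\delta}$ is absolutely continuous, this reads $\int_{x_1}^{x_2}\bigl(1-|\hat{\delta}'|\bigr)=0$ for all such $x_1,x_2$, and since $|\hat{\delta}'|\le 1$ a.e.\ it follows that $|\hat{\delta}'|=1$, i.e.\ $\delta'\in\{0,2\}$, a.e.\ on $[c,d]$. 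Exhausting each component of $E$ by such compact subintervals yields $\delta'(t)\in\{0,2\}$ for a.e.\ $t\in E$.

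The only step requiring care is the last part of the ``only if'' direction: upgrading ``\eqref{Condition2-delta} holds for all short intervals inside $E$'' to ``$|\hat{\delta}'|=1$ a.e.\ on $E$''. This is where the openness of $E$, the local positivity of the continuous function $\hat{\delta}$, the additivity $\mathbb{V}_a^b=\mathbb{V}_a^c+\mathbb{V}_c^b$ of total variation, and the absolute continuity of the Lipschitz function $\hat{\delta}$ are all combined; each ingredient is routine, but they must be assembled in the right order. Everything else is a direct specialization of Theorem~\ref{thrm-CbeCopu} and Proposition~\ref{prot-simcri}, recovering Theorem~3.8 of~\cite{KMS24}.
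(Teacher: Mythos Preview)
Your proposal is correct and follows essentially the same route as the paper: specialize Theorem~\ref{thrm-CbeCopu} to $\phi=\mathrm{id}$, so that \eqref{Condition-1} and \eqref{Condition-2} collapse to \eqref{Condition1-delta} and \eqref{Condition-3} to \eqref{Condition2-delta}; then observe that for $x_1$ with $\delta(x_1)<x_1$ and $x_2$ sufficiently close to $x_1$ the $1$-Lipschitz bound on $\hat{\delta}$ kills \eqref{Condition1-delta}, forcing \eqref{Condition2-delta}, which in turn is equivalent to $|\hat{\delta}'|=1$ a.e.\ on $[x_1,x_2]$. The paper's argument is terser and leaves the passage from ``\eqref{Condition2-delta} on short subintervals'' to ``$\delta'\in\{0,2\}$ a.e.\ on the whole open set $E$'' implicit, whereas you spell out the compact-exhaustion/chaining step and explicitly invoke absolute continuity of the Lipschitz function $\hat{\delta}$; this extra care is appropriate and does not change the underlying strategy.
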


It is known that $\bar{C}_\delta$ is a copula if and only if $\bar{C}_\delta=K_\delta$ (see~\cite{NQRU08}), where $K_\delta$ is always a copula defined by $K_\delta(x, y)=\min\{x, y, \dfrac{\delta(x)+\delta(y)}{2}\}$. In the general curvilinear case, $K_\delta$ is generalized to $K_{\Gamma_\phi}$ (see~\cite{BMJ19}), where $K_{\Gamma_\phi}$ is defined by
\[
K_{\Gamma_\phi}(x, y)=\min\{x, y, \frac{\Gamma_\phi(x)+\Gamma_\phi(\phi^{-1}(y))}{2}\}.
\]
However, $K_{\Gamma_\phi}$ is not even a quasi-copula. Zou et al.~\cite{ZSX22} proved that $K_{\Gamma_\phi}$ is a copula if and only if both $2t-\Gamma_\phi(t)$ and $2t-\Gamma_\phi(\phi^{-1}(t))$ are increasing. It is easy to see that this condition is equivalent to
\begin{equation}\label{Condition-KisCopu}
|\Gamma_\phi(t_2)-\Gamma_\phi(t_1)|\leq 2\min\{|t_2-t_1|, |\phi(t_2)-\phi(t_1)|\}.
\end{equation}
Let us go back to Example~\ref{Exa-illustrative}, where we gave a curvilinear section $\Gamma_\phi$ such that $C_2\MySs_\phi C_1$ is a copula. However, for such $\Gamma_\phi$, if $t_1, t_2\in [u^*_i, b_i[\,$ for some $i\in I$, then
\[
|\Gamma_\phi(t_2)-\Gamma_\phi(t_1)|=|t_2-t_1|+|\phi(t_2)-\phi(t_1)|>2\min\{|t_2-t_1|, |\phi(t_2)-\phi(t_1)|\}
\]
if $\phi(t_2)-\phi(t_1)\neq t_2-t_1$. Hence, $K_{\Gamma_\phi}$ is not a copula.

Note that~\eqref{Condition-KisCopu} is trivial if $\phi=\rm{id}$. If $\phi\neq \rm{id}$, however, it is quite strict. We cannot even give an example such that $C_2\MySs_\phi C_1$ is a copula and $C_2\MySs_\phi C_1=K_{\Gamma_\phi}$. This motivates the following question:

\emph{Let $\phi\in\Phi$ and $\Gamma_\phi\in\Delta_\phi$. Does it hold that $\phi=\rm{id}$ if $C_2\MySs_\phi C_1$ is a copula and $C_2\MySs_\phi C_1=K_{\Gamma_\phi}$}?

\section{Coincidence of the pointwise supremum of $\mathbb{C}_{\Gamma_\phi}$ and $A_{\Gamma_\phi}$}\label{Sec-ChaCoin}

In this section, we characterize when $C_2\MySs_\phi C_1=A_{\Gamma_\phi}$, as the following theorem shows.
\begin{thrm}\label{thrm-C=A}
Let $\phi\in\Phi$ and $\Gamma_\phi\in\Delta_\phi$. Then $C_2\MySs_\phi C_1=A_{\Gamma_\phi}$ if and only if, for any
$(x, y)\in[0, 1]^2$, the following two conditions are satisfied:
\begin{itemize}
\item[\rm (i)] $y-x>\max\{\tilde{\Gamma}_\phi(x), \tilde{\Gamma}_\phi(\phi^{-1}(y))\}$ if $y>\phi(x)$ and $\min\limits_{t\in [x, \phi^{-1}(y)]}\tilde{\Gamma}_\phi(t)<\min\{\tilde{\Gamma}_\phi(x), \tilde{\Gamma}_\phi(\phi^{-1}(y))\}$;

\item[\rm (ii)] $x-y>\max\{\hat{\Gamma}_\phi(\phi^{-1}(y)), \hat{\Gamma}_\phi(x)\}$ if $y<\phi(x)$ and $\min\limits_{t\in [\phi^{-1}(y), x]}\hat{\Gamma}_\phi(t)<\min\{\hat{\Gamma}_\phi(x), \hat{\Gamma}_\phi(\phi^{-1}(y))\}$.
\end{itemize}
\end{thrm}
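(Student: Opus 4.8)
The plan is to use that $C_2\MySs_\phi C_1\le A_{\Gamma_\phi}$ holds for free: $C_2\MySs_\phi C_1$ is the pointwise supremum of $\mathbb{C}_{\Gamma_\phi}\subseteq\mathbb{Q}_{\Gamma_\phi}$ while $A_{\Gamma_\phi}$ is the largest element of $\mathbb{Q}_{\Gamma_\phi}$, so the claimed equality is equivalent to $A_{\Gamma_\phi}(x,y)\le(C_2\MySs_\phi C_1)(x,y)$ at every point. The two sides agree on the curve $y=\phi(x)$ (both equal $\Gamma_\phi(x)$), and the two remaining regions are mirror images under the coordinate swap, which interchanges $\hat{\Gamma}_\phi\leftrightarrow\tilde{\Gamma}_\phi$ and (i)$\leftrightarrow$(ii); so I would treat only $y<\phi(x)$. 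With $a=\phi^{-1}(y)<x$ and $H=\frac12\big(\mathbb{V}_a^x(\hat{\Gamma}_\phi)+\hat{\Gamma}_\phi(a)+\hat{\Gamma}_\phi(x)\big)$ one has $A_{\Gamma_\phi}(x,y)=\min\{y,\,x-\max_{[a,x]}\hat{\Gamma}_\phi\}$ and $(C_2\MySs_\phi C_1)(x,y)=\min\{y,\,x-H\}$, and Remark~\ref{remk-variation} applied to the three-term partition $\{a,t^{\ast},x\}$ (with $t^{\ast}$ a maximizer of $\hat{\Gamma}_\phi$ on $[a,x]$) gives $\max_{[a,x]}\hat{\Gamma}_\phi\le H$. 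A short case distinction on the two minima then shows that the desired inequality fails at $(x,y)$ exactly when (a) $\max_{[a,x]}\hat{\Gamma}_\phi<H$ and (b) $x-H<y$. Finally I would record the characterization that (a) holds iff $\hat{\Gamma}_\phi$ is not nondecreasing-then-nonincreasing on $[a,x]$ (say, not \emph{unimodal}), equivalently iff there are $s_1<s_2<s_3$ in $[a,x]$ with $\hat{\Gamma}_\phi(s_2)<\min\{\hat{\Gamma}_\phi(s_1),\hat{\Gamma}_\phi(s_3)\}$: always $\mathbb{V}_a^x(\hat{\Gamma}_\phi)=\mathbb{V}_a^{t^{\ast}}(\hat{\Gamma}_\phi)+\mathbb{V}_{t^{\ast}}^x(\hat{\Gamma}_\phi)\ge(\hat{\Gamma}_\phi(t^{\ast})-\hat{\Gamma}_\phi(a))+(\hat{\Gamma}_\phi(t^{\ast})-\hat{\Gamma}_\phi(x))$, and equality here (i.e.\ $\max_{[a,x]}\hat{\Gamma}_\phi=H$) forces $\hat{\Gamma}_\phi$ monotone on $[a,t^{\ast}]$ and on $[t^{\ast},x]$.

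For ``coincidence $\Rightarrow$ (i) and (ii)'' I would argue contrapositively. If (ii) fails at $(x,y)$ with $y<\phi(x)$, then, with $a=\phi^{-1}(y)$ and $m=\min_{[a,x]}\hat{\Gamma}_\phi$, we have $m<\min\{\hat{\Gamma}_\phi(a),\hat{\Gamma}_\phi(x)\}$ (so (a) holds) and $x-y\le\max\{\hat{\Gamma}_\phi(a),\hat{\Gamma}_\phi(x)\}$. The valley estimate $\mathbb{V}_a^x(\hat{\Gamma}_\phi)\ge(\hat{\Gamma}_\phi(a)-m)+(\hat{\Gamma}_\phi(x)-m)$ yields $H\ge\hat{\Gamma}_\phi(a)+\hat{\Gamma}_\phi(x)-m>\max\{\hat{\Gamma}_\phi(a),\hat{\Gamma}_\phi(x)\}\ge x-y$, so (b) holds as well; by the first paragraph $(x,y)$ is then a point of strict inequality, hence $C_2\MySs_\phi C_1\neq A_{\Gamma_\phi}$. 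The failure of (i) is treated by the mirror argument with $\tilde{\Gamma}_\phi$ in the region $y>\phi(x)$.

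For the converse, assuming (i) and (ii), by the first paragraph it suffices to show: whenever $\hat{\Gamma}_\phi$ is non-unimodal on an interval $[a,b]$ then $H_{[a,b]}:=\frac12\big(\mathbb{V}_a^b(\hat{\Gamma}_\phi)+\hat{\Gamma}_\phi(a)+\hat{\Gamma}_\phi(b)\big)\le b-\phi(a)$ (and symmetrically for $\tilde{\Gamma}_\phi$, condition (i)). A preliminary consequence of (ii) alone is $\max_{[a,b]}\hat{\Gamma}_\phi<b-\phi(a)$ on such an interval: taking a maximizer $t^{\ast}$, non-unimodality produces a strict valley having $t^{\ast}$ as one shoulder, and (ii) at the endpoint pair of that valley's supporting interval bounds $\hat{\Gamma}_\phi(t^{\ast})$ by $b-\phi(a)$. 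For the full bound I would use the representation $H_{[a,b]}=\sup\big\{\sum_{k=1}^m\hat{\Gamma}_\phi(x_{2k-1})-\sum_{k=1}^{m-1}\hat{\Gamma}_\phi(x_{2k}):\{x_i\}_{i=0}^{2m}\in\mathcal P[a,b]\big\}$ from Remark~\ref{remk-variation} and induct on the number $m$ of teeth: an interior valley $x_{2k}$ with $\hat{\Gamma}_\phi(x_{2k})\in\{\hat{\Gamma}_\phi(x_{2k-1}),\hat{\Gamma}_\phi(x_{2k+1})\}$ can be deleted together with a neighbouring peak without decreasing the sum, lowering $m$; a strict interior valley $x_{2k}$ makes $\hat{\Gamma}_\phi$ dip below both shoulders of $[x_{2k-1},x_{2k+1}]$, so (ii) applies there, and combined with the estimate $\hat{\Gamma}_\phi(s)-\hat{\Gamma}_\phi(t)\le\phi(t)-\phi(s)$ for $s<t$ (property (ii) of curvilinear sections rewritten) this telescopes the sum down to $b-\phi(a)$.

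The crux — where essentially all of the work concentrates — is this last induction. Peeling off the leftmost strict valley $x_2$, the rate estimate on $\hat{\Gamma}_\phi(x_1)-\hat{\Gamma}_\phi(x_2)$ leaves a residual $\phi(x_2)-\phi(x_1)$ that is absorbed cleanly by the inductive bound $b-\phi(x_2)$ on the tail $[x_2,b]$ only when that tail is itself non-unimodal; if the tail is unimodal one must instead first pass to the coarser partition that merges the whole unimodal tail into one tooth (using that its peak dominates all tail peaks) and re-enter the induction with strictly fewer teeth. Carrying out this bookkeeping, together with its mirror image for $\tilde{\Gamma}_\phi$ and (i), is the main obstacle.
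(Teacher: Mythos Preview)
Your necessity argument is correct and in fact more direct than the paper's: you work at the given point $(x,y)$ and combine the valley estimate $\mathbb{V}_a^x(\hat{\Gamma}_\phi)\ge\hat{\Gamma}_\phi(a)+\hat{\Gamma}_\phi(x)-2m$ with the failure of (ii) to force both (a) and (b), hence strict inequality there. The paper instead locates an auxiliary maximizer $t^*$ and a minimizer $t_0$, splits on $t_0\lessgtr t^*$, and compares $f_2$ (resp.\ $f_1$) with $A_{\Gamma_\phi}$ at the shifted point $(t^*,y)$ or $(x,\phi(t^*))$. Your route avoids this case split entirely.

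The sufficiency, however, has a real gap beyond ``bookkeeping''. Your reduction to proving $H_{[a,b]}\le b-\phi(a)$ whenever $\hat{\Gamma}_\phi$ is non-unimodal on $[a,b]$ is equivalent to what must be shown, but the induction you sketch does not close in the unimodal-tail case. If the tail $[x_2,b]$ is unimodal you bound $S'\le M':=\max_{[x_2,b]}\hat{\Gamma}_\phi$ and then invoke (ii) on $[x_1,t^{**}]$ to get $M'<t^{**}-\phi(x_1)\le b-\phi(x_1)$; combined with $\hat{\Gamma}_\phi(x_1)-\hat{\Gamma}_\phi(x_2)\le\phi(x_2)-\phi(x_1)$ this yields only $S_m<b+\phi(x_2)-2\phi(x_1)$, which need not be $\le b-\phi(a)$. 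Replacing the left-peel by a right-peel runs into the symmetric obstruction, and there is no evident way to make the two estimates telescope.

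The paper sidesteps this entirely. Instead of bounding $H_{[a,b]}$ via partitions, it locates inside $[a,b]$ a maximal subinterval $[y_0,x_0]$ on which $\hat{\Gamma}_\phi$ is \emph{constant} (equal to a local minimum value $\hat{\Gamma}_\phi(t_1)$). On that flat piece $\mathbb{V}_{y_0}^{x_0}(\hat{\Gamma}_\phi)=0$, so $H_{[y_0,x_0]}=\hat{\Gamma}_\phi(t_1)$ trivially, and a limiting application of (ii) to slightly larger intervals gives $x_0-\phi(y_0)\ge\hat{\Gamma}_\phi(t_1)$; hence $(C_2\MySs_\phi C_1)(x_0,\phi(y_0))=\phi(y_0)$. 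The final step is to propagate this single equality to the original point $(x,y)$ using only that $C_2\MySs_\phi C_1$ is a quasi-copula: monotonicity and $1$-Lipschitzness give $(C_2\MySs_\phi C_1)(x,y)\ge(C_2\MySs_\phi C_1)(x_0,\phi(y_0))-(\phi(y_0)-y)=y$, forcing $(C_2\MySs_\phi C_1)(x,y)=y=A_{\Gamma_\phi}(x,y)$. This ``find a flat bottom, then propagate by Lipschitz'' idea is the key insight you are missing; it replaces your global total-variation induction by a single local computation plus structural properties of quasi-copulas.
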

\begin{proof}
\emph{Necessity:} (i) Suppose that $y>\phi(x)$ and $\min\limits_{t\in [x, \phi^{-1}(y)]}\tilde{\Gamma}_\phi(t)<\min\{\tilde{\Gamma}_\phi(x), \tilde{\Gamma}_\phi(\phi^{-1}(y))\}$. Clearly, there exist
$t_0\in\,]x, \phi^{-1}(y)[\,$ and $t^*\in [x, \phi^{-1}(y)]$ such that
\[\tilde{\Gamma}_\phi(t_0)=\min\limits_{t\in[x, \phi^{-1}(y)]}\tilde{\Gamma}_\phi(t), \tilde{\Gamma}_\phi(t^*)=\max\limits_{t\in[x, \phi^{-1}(y)]}\tilde{\Gamma}_\phi(t)\]
and $t_0\neq t^*$. We consider two cases:
\begin{itemize}
\item[\rm (a)] $t_0>t^*$.

 In this case, we have $A_{\Gamma_\phi}(t^*, y)=\min\{t^*, y-\tilde{\Gamma}_\phi(t^*)\}$. Since
 \[
 \mathbb{V}_{t^*}^{\phi^{-1}(y)}(\tilde{\Gamma}_\phi)\geq\tilde{\Gamma}_\phi(\phi^{-1}(y))+\tilde{\Gamma}_\phi(t^*)-2\tilde{\Gamma}_\phi(t_0)
 >\tilde{\Gamma}_\phi(t^*)-\tilde{\Gamma}_\phi(\phi^{-1}(y)),
 \]
 it follows that $f_2(t^*, y)<y-\tilde{\Gamma}_\phi(t^*)$. To ensure
 \[(C_2\MySs_\phi C_1)(t^*, y)=\min\{t^*, f_2(t^*, y)\}=A_{\Gamma_\phi}(t^*, y),\]
 it must hold that $t^*<y-\tilde{\Gamma}_\phi(t^*)$. Hence, we have
 \[y-x\geq y-t^*>\tilde{\Gamma}_\phi(t^*)\geq\max\{\tilde{\Gamma}_\phi(x), \tilde{\Gamma}_\phi(\phi^{-1}(y))\}.\]

 \item[\rm (b)] $t_0<t^*$.

 In this case, we have $A_{\Gamma_\phi}(x, \phi(t^*))=\min\{x, \phi(t^*)-\tilde{\Gamma}_\phi(t^*)\}$. Since
 \[
 \mathbb{V}_x^{t^*}(\tilde{\Gamma}_\phi)\geq\tilde{\Gamma}_\phi(t^*)+\tilde{\Gamma}_\phi(x)-2\tilde{\Gamma}_\phi(t_0)
 >\tilde{\Gamma}_\phi(t^*)-\tilde{\Gamma}_\phi(x),
 \]
 it follows that $f_2(x, \phi(t^*))<\phi(t^*)-\tilde{\Gamma}_\phi(t^*)$. To ensure
 \[(C_2\MySs_\phi C_1)(x, \phi(t^*))=\min\{x, f_2(x, \phi(t^*))\}=A_{\Gamma_\phi}(x, \phi(t^*)),\]
  it must hold that $x<\phi(t^*)-\tilde{\Gamma}_\phi(t^*)$. Hence, we have
  \[y-x\geq \phi(t^*)-x>\tilde{\Gamma}_\phi(t^*)\geq\max\{\tilde{\Gamma}_\phi(x), \tilde{\Gamma}_\phi(\phi^{-1}(y))\}.\]
\end{itemize}

(ii) The proof is similar to that of (i).

\emph{Sufficiency:} Suppose that $(x, y)\in [0, 1]^2$. If $y=\phi(x)$, then
\[(C_2\MySs_\phi C_1)(x, y)=A_{\Gamma_\phi}(x, y)=\Gamma_\phi(x).\]
Now we consider the case that $y<\phi(x)$ (the proof is similar if $y>\phi(x)$). We distinguish two possible cases to prove that $(C_2\MySs_\phi C_1)(x, y)=A_{\Gamma_\phi}(x, y)$:
\begin{itemize}
\item[\rm (a)] For any $[y_1, x_1]\subseteq [\phi^{-1}(y), x]$, we have $\min\{\hat{\Gamma}_\phi(y_1), \hat{\Gamma}_\phi(x_1)\}=\min\limits_{t\in[y_1, x_1]}\hat{\Gamma}_\phi(t)$.

    In this case, there exists $t_0\in [\phi^{-1}(y), x]$ such that $\hat{\Gamma}_\phi(t)$ is increasing on $[\phi^{-1}(y), t_0]$ and decreasing on $[t_0, x]$. Hence,
    \[
    \mathbb{V}_{\phi^{-1}(y)}^x\hat{\Gamma}_\phi(t)=2\hat{\Gamma}_\phi(t_0)-\hat{\Gamma}_\phi(\phi^{-1}(y))-\hat{\Gamma}_\phi(x),
    \]
    which implies that $f_1(x, y)=x-\hat{\Gamma}_\phi(t_0)$ and
    \[
    (C_2\MySs_\phi C_1)(x, y)=\min\{y, x-\hat{\Gamma}_\phi(t_0)\}=\min\{y, x-\max\limits_{t\in[\phi^{-1}(y), x]}\hat{\Gamma}_\phi(t)\}=A_{\Gamma_\phi}(x, y).
    \]

\item[\rm (b)] There exists $[y_1, x_1]\subseteq [\phi^{-1}(y), x]$ with $\min\{\hat{\Gamma}_\phi(y_1), \hat{\Gamma}_\phi(x_1)\}>\min\limits_{t\in[y_1, x_1]}\hat{\Gamma}_\phi(t)$.

    In this case, there exists $t_1\in\,]y_1, x_1[\,$ such that $\hat{\Gamma}_\phi(t_1)=\min\limits_{t\in[y_1, x_1]}\hat{\Gamma}_\phi(t)$. Stipulate
    \[y_0=\inf\{s\in\,]y_1, t_1]\mid \hat{\Gamma}_\phi(t)=\hat{\Gamma}_\phi(t_1), \forall t\in[s, t_1]\}\]
    and
    \[x_0=\sup\{s\in[t_1, x_1[\,\mid \hat{\Gamma}_\phi(t)=\hat{\Gamma}_\phi(t_1), \forall t\in[t_1, s]\}.\]
    It is easy to see that $[y_0, x_0]\subseteq\,]y_1, x_1[\,$ and $\hat{\Gamma}_\phi(t)=\hat{\Gamma}_\phi(t_1)$ for all $t\in [y_0, x_0]$. For sufficiently small $\epsilon>0$, there exist $y_\epsilon\in\,]y_0-\epsilon, y_0[\,$ and $x_\epsilon\in\,]x_0, x_0+\epsilon[\,$ such that $\hat{\Gamma}_\phi(y_\epsilon)>\hat{\Gamma}_\phi(t_1)$ and $\hat{\Gamma}_\phi(x_\epsilon)>\hat{\Gamma}_\phi(t_1)$, \emph{i.e.},
    \[
    \min_{t\in[y_\epsilon, x_\epsilon]}\hat{\Gamma}_\phi(t)=\hat{\Gamma}_\phi(t_1)
    <\min\left\{\hat{\Gamma}_\phi(y_\epsilon), \hat{\Gamma}_\phi(x_\epsilon)\right\}.
    \]
    It follows from condition (ii) that
    \[
    x_\epsilon-\phi(y_\epsilon)>\max\{\hat{\Gamma}_\phi(y_\epsilon), \hat{\Gamma}_\phi(x_\epsilon)\}>\hat{\Gamma}_\phi(t_1).
    \]
    Letting $\epsilon\to 0$, we have $x_0-\phi(y_0)\geq\hat{\Gamma}_\phi(t_1)$.
    Hence,
    \begin{eqnarray*}
    (C_2\MySs_\phi C_1)(x_0, \phi(y_0))&=&\min\left\{\phi(y_0), x_0-\frac{1}{2}\Big(\mathbb{V}_{y_0}^{ x_0}(\hat{\Gamma}_\phi)+\hat{\Gamma}_\phi(x_0)+\hat{\Gamma}_\phi(y_0)\Big)\right\}\\
    &=&\min\{\phi(y_0), x_0-\hat{\Gamma}_\phi(t_1)\}=\phi(y_0).
    \end{eqnarray*}
  As $C_2\MySs_\phi C_1$ is a quasi-copula, it is increasing and $1$-Lipschitz, and thus
   \[
   (C_2\MySs_\phi C_1)(x, y)\geq(C_2\MySs_\phi C_1)(x_0, y)\geq(C_2\MySs_\phi C_1)(x_0, \phi(y_0))-(\phi(y_0)-y)=y.
   \]
  Therefore, $(C_2\MySs_\phi C_1)(x, y)=y=A_{\Gamma_\phi}(x, y)$.
\end{itemize}
We conclude that $C_2\MySs_\phi C_1=A_{\Gamma_\phi}$.
\end{proof}

Conditions (i) and (ii) in Theorem~\ref{thrm-C=A} can be rephrased as follows:
for any interval $[x, y]\subseteq[0, 1]$, the following two conditions are satisfied:
\begin{itemize}
\item[\rm (i)] $\phi(y)-x>\max\{\tilde{\Gamma}_\phi(x), \tilde{\Gamma}_\phi(y)\}$ if $\min\limits_{t\in [x, y]}\tilde{\Gamma}_\phi(t)<\min\{\tilde{\Gamma}_\phi(x), \tilde{\Gamma}_\phi(y)\}$;

\item[\rm (ii)] $y-\phi(x)>\max\{\hat{\Gamma}_\phi(x), \hat{\Gamma}_\phi(y)\}$ if $\min\limits_{t\in [x, y]}\hat{\Gamma}_\phi(t)<\min\{\hat{\Gamma}_\phi(x), \hat{\Gamma}_\phi(y)\}$.
\end{itemize}

We illustrate these two conditions as follows.
\begin{exa}
(i) Consider $\hat{\Gamma}_\phi$ in Example~\ref{Exa-itisacopula}. Since $\dfrac{2}{5}-(\dfrac{2}{5})^2<\dfrac{2}{5}-\dfrac{26}{225}$, there exist $x_0\in\,]\dfrac{1}{3}, \dfrac{2}{5}[\,$ and $y_0\in\, ]\dfrac{2}{5}, t_0[\,$ with $y_0-\phi(x_0)<\hat{\Gamma}_\phi(\dfrac{2}{5})$. Note that $\min\limits_{t\in [x_0, y_0]}\hat{\Gamma}_\phi(t)=\hat{\Gamma}_\phi(\dfrac{2}{5})<\min\{\hat{\Gamma}_\phi(x_0), \hat{\Gamma}_\phi(y_0)\}$, but $y_0-\phi(x_0)<\max\{\hat{\Gamma}_\phi(x_0), \hat{\Gamma}_\phi(y_0)\}$, \emph{i.e.}, condition (ii) of Theorem~\ref{thrm-C=A} fails. Hence, $C_2\MySs_\phi C_1\neq A_{\Gamma_\phi}$.

(ii) Consider $\phi\in\Phi$ and $\Gamma_\phi\in\Delta_\phi$ defined by
\[
\phi(t)=\left \{
        \begin {array}{ll}
         \dfrac{\sqrt{t}}{2}
                  &\quad \text{\rm if\ \ $t\in\, [0, \dfrac{1}{4}]$} \\[2mm]
         t
                  &\quad \text{\rm if\ \ $t\in\, ]\dfrac{1}{4}, \dfrac{3}{4}]$} \\[2mm]
         \dfrac{4t^2+3}{7}
                  &\quad \text{\rm if\ \ $t\in\,]\dfrac{3}{4}, 1]\,$}
        \end {array}
       \right.
\]
and
\[
\Gamma_\phi(t) =\left \{
        \begin {array}{ll}
         t-t^2
                  &\quad \text{\rm if\ \ $t\in [0, \dfrac{1}{4}]$} \\[2mm]
           \dfrac{9t}{8}-\dfrac{3}{32}
                  &\quad \text{\rm if\ \ $t\in\,]\dfrac{1}{4}, \dfrac{1}{2}]\,$} \\[2mm]
           t-\dfrac{1}{32}
                  &\quad \text{\rm if\ \ $t\in\,]\dfrac{1}{2}, \dfrac{3}{4}]\,$} \\[2mm]
           \dfrac{t}{4}+\dfrac{17}{32}
                  &\quad \text{\rm if\ \ $t\in\,]\dfrac{3}{4}, \dfrac{7}{8}]\,$}\\[2mm]
           2t-1
                  &\quad \text{\rm if\ \ $t\in\,]\dfrac{7}{8}, 1].\,$}
        \end {array}
       \right.
\]
Since
\[
\tilde{\Gamma}_\phi(t) =\left \{
        \begin {array}{ll}
        \dfrac{\sqrt{t}}{2}-t+t^2
                  &\quad \text{\rm if\ \ $t\in [0, \dfrac{1}{4}]$} \\[2mm]
        -\dfrac{t}{8}+\dfrac{3}{32}
                  &\quad \text{\rm if\ \ $t\in\,]\dfrac{1}{4}, \dfrac{1}{2}]\,$} \\[2mm]
           \dfrac{1}{32}
                  &\quad \text{\rm if\ \ $t\in\,]\dfrac{1}{2}, \dfrac{3}{4}]\,$} \\[2mm]
           \dfrac{4t^2}{7}-\dfrac{t}{4}-\dfrac{23}{224}
                  &\quad \text{\rm if\ \ $t\in\,]\dfrac{3}{4}, \dfrac{7}{8}]\,$}\\[2mm]
           \dfrac{4t^2}{7}-2t+\dfrac{10}{7}
                  &\quad \text{\rm if\ \ $t\in\,]\dfrac{7}{8}, 1],\,$}
        \end {array}
       \right.
\]
for any interval $[x, y]\subseteq[0, 1]$ with $\min\limits_{t\in [x, y]}\tilde{\Gamma}_\phi(t)<\min\{\tilde{\Gamma}_\phi(x), \tilde{\Gamma}_\phi(y)\}$, we have $x<\dfrac{1}{2}$ and $y>\dfrac{3}{4}$. Hence,
\[\phi(y)-x>\dfrac{1}{4}>\max\{\tilde{\Gamma}_\phi(\dfrac{3-\sqrt{5}}{8}), \tilde{\Gamma}_\phi(\dfrac{7}{8})\}\geq\max\{\tilde{\Gamma}_\phi(x), \tilde{\Gamma}_\phi(y)\},\]
\emph{i.e.}, condition (i) of Theorem~\ref{thrm-C=A} holds. Similarly, condition (ii) of Theorem~\ref{thrm-C=A} also holds.
So $C_2\MySs_\phi C_1=A_{\Gamma_\phi}$.
\end{exa}

Recall that a curvilinear section $\Gamma_\phi$ is said to be \emph{$\phi$-simple} if
\[\hat{\Gamma}_\phi(\lambda x+(1-\lambda) y)\geq\min\{\hat{\Gamma}_\phi(x), \hat{\Gamma}_\phi(y)\}\]
and
\[\tilde{\Gamma}_\phi(\lambda x+(1-\lambda) y)\geq\min\{\tilde{\Gamma}_\phi(x), \tilde{\Gamma}_\phi(y)\}\]
hold for any $x, y, \lambda\in [0, 1]$ (see~\cite{XWZJ23}). Clearly, if $\Gamma_\phi$ is $\phi$-simple, then, for any
$(x_1, x_2)\in[0, 1]^2$ with $x_1<x_2$, we have

\centerline{$\min\{\hat{\Gamma}_\phi(x_1), \hat{\Gamma}_\phi(x_2)\}=\min\limits_{t\in[x_1, x_2]}\hat{\Gamma}_\phi(t)$ and $\min\{\tilde{\Gamma}_\phi(x_1), \tilde{\Gamma}_\phi(x_2)\}=\min\limits_{t\in[x_1, x_2]}\tilde{\Gamma}_\phi(t)$.}

\noindent Hence, conditions (i) and (ii) in Theorem~\ref{thrm-C=A} are naturally satisfied, and thus
$C_2\MySs_\phi C_1=A_{\Gamma_\phi}$.
\begin{corl}\cite{XWZJ23}
Let $\phi\in\Phi$ and $\Gamma_\phi\in\Delta_\phi$. If $\Gamma_\phi$ is $\phi$-simple, then $C_2\MySs_\phi C_1=A_{\Gamma_\phi}$.
\end{corl}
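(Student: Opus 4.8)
The plan is to obtain this corollary as an immediate consequence of Theorem~\ref{thrm-C=A}: I will show that $\phi$-simplicity forces the premises in conditions (i) and (ii) of that theorem (in the interval-form reformulation stated just after its proof) to be unsatisfiable, so that both conditions hold vacuously.

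The first step records the elementary consequence of $\phi$-simplicity. Fix $x_1,x_2\in[0,1]$ with $x_1<x_2$. Any $t\in[x_1,x_2]$ can be written as $t=\lambda x_1+(1-\lambda)x_2$ for some $\lambda\in[0,1]$, so the defining inequalities of a $\phi$-simple curvilinear section give $\hat{\Gamma}_\phi(t)\ge\min\{\hat{\Gamma}_\phi(x_1),\hat{\Gamma}_\phi(x_2)\}$ and $\tilde{\Gamma}_\phi(t)\ge\min\{\tilde{\Gamma}_\phi(x_1),\tilde{\Gamma}_\phi(x_2)\}$. Since property (ii) of $\Delta_\phi$ together with the continuity of $\phi$ makes $\Gamma_\phi$, hence $\hat{\Gamma}_\phi$ and $\tilde{\Gamma}_\phi$, continuous, the minima over $[x_1,x_2]$ are attained; combining the displayed bounds with the trivial reverse inequalities (the endpoints lie in $[x_1,x_2]$) yields
\[
\min_{t\in[x_1,x_2]}\hat{\Gamma}_\phi(t)=\min\{\hat{\Gamma}_\phi(x_1),\hat{\Gamma}_\phi(x_2)\},\qquad
\min_{t\in[x_1,x_2]}\tilde{\Gamma}_\phi(t)=\min\{\tilde{\Gamma}_\phi(x_1),\tilde{\Gamma}_\phi(x_2)\}.
\]

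The second step plugs these equalities into the interval-form of conditions (i) and (ii) of Theorem~\ref{thrm-C=A}. For any $[x,y]\subseteq[0,1]$, the strict inequality $\min_{t\in[x,y]}\tilde{\Gamma}_\phi(t)<\min\{\tilde{\Gamma}_\phi(x),\tilde{\Gamma}_\phi(y)\}$ appearing as the hypothesis of (i) cannot hold, so (i) is vacuously satisfied; the same argument applied to $\hat{\Gamma}_\phi$ disposes of (ii). Theorem~\ref{thrm-C=A} then gives $C_2\MySs_\phi C_1=A_{\Gamma_\phi}$.

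I do not foresee a genuine obstacle. The only point warranting a line of care is the attainment of the minimum over the closed interval, which reduces to noting that $\hat{\Gamma}_\phi$ and $\tilde{\Gamma}_\phi$ are continuous; after that, the corollary is a direct citation of Theorem~\ref{thrm-C=A}.
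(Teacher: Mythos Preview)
Your argument is correct and mirrors the paper's own reasoning: both observe that $\phi$-simplicity forces $\min_{t\in[x_1,x_2]}\hat{\Gamma}_\phi(t)=\min\{\hat{\Gamma}_\phi(x_1),\hat{\Gamma}_\phi(x_2)\}$ and the analogous identity for $\tilde{\Gamma}_\phi$, making the hypotheses in conditions (i) and (ii) of Theorem~\ref{thrm-C=A} vacuous. Your added remark on continuity and attainment of the minimum is a harmless extra justification the paper leaves implicit.
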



\end{document}